\documentclass[a4]{amsart}

\input xypic 
\input xy 
\xyoption{all} 
\usepackage{amssymb}
\usepackage{bbm} 
\usepackage{xcolor} 
\usepackage{tikz}
\usepackage{tikz-cd}
\usepackage{verbatim}
\newtheorem*{theorem*}{Theorem}
\newtheorem{theorem}{Theorem}[section]
\newtheorem{proposition}[theorem]{Proposition}
\newtheorem{lemma}[theorem]{Lemma}
\newtheorem{corollary}[theorem]{Corollary}

\theoremstyle{definition}

\newtheorem{remark}[theorem]{Remark}

\newcommand{\conn}{\ensuremath{\#}}

\newcommand{\rhlgy}[1]{\ensuremath{\widetilde{H}_{*}(#1)}}

\newcounter{bean}

\newcommand{\namedright}[3]{\ensuremath{#1\stackrel{#2}
 {\longrightarrow}#3}}
\newcommand{\nameddright}[5]{\ensuremath{#1\stackrel{#2}
 {\longrightarrow}#3\stackrel{#4}{\longrightarrow}#5}}
\newcommand{\namedddright}[7]{\ensuremath{#1\stackrel{#2}
 {\longrightarrow}#3\stackrel{#4}{\longrightarrow}#5
  \stackrel{#6}{\longrightarrow}#7}}

\begin{document}


\title{Local Inertness of Poincar\'{e} duality complexes} 

\author{Samik Basu}
\address{Stat Math Unit, Indian Statistical Institute Kolkata 700108, India}
\email{samik.basu2@gmail.com; samikbasu@isical.ac.in}

\author{Sebastian Chenery}
\address{University of Bristol, School of Mathematics, Fry Building, Woodland Road, Bristol, BS8 1UG}
\email{seb.chenery@bristol.ac.uk}

\author{Ruizhi Huang}
\address{State Key Laboratory of Mathematical Sciences and Institute of Mathematics, Academy of Mathematics
and Systems Science, Chinese Academy of Sciences, Beijing 100190, China
   \newline
   \indent School of Mathematical Sciences, University of Chinese Academy of Sciences, Beijing 100049, China}
\email{huangrz@amss.ac.cn}

\author{Lewis Stanton}
\address{Mathematical Sciences, University of Southampton, Southampton 
   SO17 1BJ, United Kingdom}
\email{L.R.Stanton@soton.ac.uk}

\author{Stephen Theriault}
\address{Mathematical Sciences, University of Southampton, Southampton 
   SO17 1BJ, United Kingdom}
\email{S.D.Theriault@soton.ac.uk}

\subjclass[2020]{Primary 55P35, 57N65; Secondary 57P10}
\keywords{Poincar\'{e} duality complex, loop space decomposition, cell attachment} 


\begin{abstract} 
We prove that, under certain homological conditions, the attaching map of the top cell of a Poincar\'{e} duality complex is inert when localised away from a finite set of primes. This improves on a result of F\'elix and Tanr\'e in these cases. As an additional application of the methods, we give a loop space decomposition of simply-connected $6$-dimensional Poincar\'{e} duality complexes satisfying certain hypotheses. We also show that, under the hypotheses of the inertness theorem, the $(n-1)$-skeleton of an $n$-dimensional Poincar\'{e} duality complex satisfies the hyperbolic form of Moore's Conjecture after localising away from an explicit finite set of primes, and use this to obtain new examples of \(p\)-local maps between spheres that are not inert. 
\end{abstract} 

\medskip 

\maketitle

\section{Introduction}
\label{sec:intro}

A major problem in homotopy theory is to understand the effect on homotopy groups of attaching a cell. In the context of Poincar\'{e} duality complexes, this has attracted much recent attention. Let $M$ be a simply-connected Poincar\'{e} duality complex of dimension $n$. Denoting by $\overline{M}$ the $(n-1)$-skeleton of $M$, there is a homotopy cofibration $S^{n-1} \xrightarrow{f} \overline{M} \xrightarrow{i} M$, where $f$ is the attaching map for the top cell of $M$ and $i$ is the skeletal inclusion. For $\mathbb{K}$ a field, the map $f$ is called $\mathbb{K}$-inert if $(\Omega f)_*:H_*(\Omega \overline{M};\mathbb{K}) \to H_*(\Omega M;\mathbb{K})$ is surjective. 

Halperin and Thomas \cite{HT} proved the remarkable fact that $f$ is $\mathbb{Q}$-inert if the cohomology ring $H^*(M;\mathbb{Q})$ is not generated by a single element. Later, F\'elix and Tanr\'e \cite{FT} proved an analogous result for fields of finite characteristic in the case that $M$ is a closed manifold. In particular, they proved the following, where for a finite $CW$-complex $X$ and field $\mathbb{K}$, $B_{X,\mathbb{K}} = \sum_{i \geq 1} \text{rank}(H_i(X;\mathbb{K}))$.  

\begin{theorem*}
    Let \(M\) be a closed, \(k\)-connected, \(n\)-dimensional manifold. If the cohomology ring \(H^*(M;\mathbb{K})\) is not generated by a single element, then there exists an integer \(N\) such that for every field \(\mathbb{K}\) of characteristic \(p > N\), the attaching map for the top cell of \(M\) is \(\mathbb{K}\)-inert. In general, one may take
    \[
        N = n + k \cdot B_{M,\mathbb{K}}.
    \]
    If \(M\) is instead \(2k\)-connected but not \((2k+1)\)-connected, one may take \(N = n\).
\end{theorem*}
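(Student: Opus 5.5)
The strategy follows Félix and Tanré: reduce $\mathbb{K}$-inertness of $f$ to a statement about the homology of $\Omega M$, then use Anick's tame homotopy theory to transport the rational result of Halperin--Thomas to characteristic $p$ for $p$ large. Recall the criterion of Halperin--Lemaire: for the cofibration $S^{n-1}\xrightarrow{f}\overline{M}\xrightarrow{i} M$, the map $f$ is $\mathbb{K}$-inert exactly when $H_*(\Omega M;\mathbb{K})$ is the quotient of $H_*(\Omega\overline{M};\mathbb{K})$ by the two-sided ideal generated by the Hurewicz image of the adjoint of $f$. An equivalent form is that the homotopy fibre of $i$ has free $\mathbb{K}$-homology of the expected shape, namely a suspension of $\Omega M\ltimes S^{n-1}$ type. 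So the goal is to verify this algebraic condition over $\mathbb{K}$.

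The key steps are as follows.

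\textbf{Step 1.} Pass to algebraic models. Since $M$ is $k$-connected and finite, Anick's theorem says that in characteristic $p>n/k$, or more precisely $p$ beyond the range where $p$-th powers and Steenrod operations interfere, $M$ has a free dg Lie model $(\mathbb{L}(V),\partial)$ over $\mathbb{Z}_{(p)}$ or $\mathbb{F}_p$. Its universal enveloping algebra computes $H_*(\Omega M;\mathbb{K})$, and the top cell corresponds to a single generator $w$ with $\partial w$ modelling $f$. The same holds for $\overline{M}$ with $w$ removed.

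\textbf{Step 2.} Use the cohomology hypothesis. Because $H^*(M;\mathbb{K})$ is not monogenic, Poincaré duality over $\mathbb{K}$ gives two classes $x,y$ with $xy\neq0$ and $y$ not a power of $x$. This lets us run the Halperin--Thomas argument. One shows that $H_*(\Omega M;\mathbb{K})$ has exponential growth, or more directly that $\partial w$ is not a zero divisor, so that the ideal it generates in $UL(V')$ is free. This is the ``inert element'' criterion of Anick/Halperin--Lemaire. The Halperin--Thomas proof uses only the algebra structure of the quadratic part of $\partial w$, which is dual to the cup product, together with a filtration/spectral sequence argument. Both transfer verbatim once the model is a free Lie algebra over $\mathbb{K}$ whose quadratic part is governed by the cup products.

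\textbf{Step 3.} Track the bound on $p$. The model exists for $p>n/k$ roughly. The Halperin--Thomas spectral sequence argument needs divided-power and restriction issues to vanish in degrees up to the relevant range. Bounding the word length of the relevant brackets by the number of generators, $B_{M,\mathbb{K}}$, each of degree at least $k$, gives degrees up to $n+k\,B_{M,\mathbb{K}}$. This yields $N=n+kB_{M,\mathbb{K}}$. In the $2k$-connected, not $(2k+1)$-connected case, the bottom cell has degree $2k+1$ and is odd. Then brackets of the bottom classes already suffice to exhibit non-monogenicity within degree $n$, so $N=n$.

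The main obstacle is Step 2's transfer to characteristic $p$. Over $\mathbb{Q}$, Halperin--Thomas use rational homotopy (Sullivan/Quillen) freely. In characteristic $p$, $H_*(\Omega M;\mathbb{K})$ is only a Hopf algebra with possible restriction and non-primitively-generated phenomena. I would first try to prove that the associated graded for the bracket-length filtration of $UL(V)$ is $U$ of the quadratic Lie algebra determined by $H^*(M;\mathbb{K})$, valid in degrees below $p$. I would then check inertness there via the Anick criterion, namely that the Hilbert series equals $\bigl(1-\overline{M}(t)+t^{n}\bigr)^{-1}$ up to shift, and finally lift by a spectral sequence comparison. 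This lifting is where the explicit $N$ is consumed.
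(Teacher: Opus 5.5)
The paper does not prove this statement. It is quoted from F\'elix--Tanr\'e \cite{FT}, and the only thing said about its proof is that it uses $p$-local models, analogous to rational models, that exist through a dimensional range. Your outline is in that spirit, but there is a genuine gap at the step that carries all the content. In Step~2 you assert that the rational Halperin--Thomas argument ``transfers verbatim''. In your last paragraph you concede that it does not, and you only describe what you would try.

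The criterion you propose to check also does not detect inertness. Since $U\mathbb{L}(V')=T(V')$ is a domain, any nonzero $\partial w$ is a non-zero-divisor generating a free one-sided ideal, whether or not $H^*(M;\mathbb{K})$ is monogenic. Take $M=\mathbb{C}P^{2}$ and $p\geq 5$. Then $\partial w$ is, up to a unit, $u^{2}=\tfrac{1}{2}[u,u]\in T(u)$ with $|u|=1$. Yet the top cell is not $\mathbb{K}$-inert: $H_*(\Omega\mathbb{C}P^{2};\mathbb{K})\cong H_*(S^{1}\times\Omega S^{5};\mathbb{K})$ is zero in degree $2$, so the image of $H_*(\Omega S^{2};\mathbb{K})=T(u)$ is spanned by $1$ and $u$ and misses degree $4$.

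What must actually be shown is that the Hurewicz class $a\in H_{n-2}(\Omega\overline{M};\mathbb{K})$ of the adjoint of $f$ is inert (strongly free) in the homology algebra. Equivalently, $H_*(\Omega\overline{M};\mathbb{K})/(a)$ must have Hilbert series $(P(t)^{-1}+t^{n-2})^{-1}$, where $P(t)$ is the Hilbert series of $H_*(\Omega\overline{M};\mathbb{K})$. Proving this in characteristic $p$ is the work your proposal leaves undone. There $H(U\mathbb{L})$ need not equal $U(H\mathbb{L})$, and the rational Lie-theoretic tools are not available.

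Step~3 does not derive the bound; it reverse-engineers it. There are three problems:
\begin{enumerate}
\item A lower bound $k$ on the degrees of the generators gives \emph{lower} bounds on the degrees of brackets, not the upper bound $n+kB_{M,\mathbb{K}}$.
\item The ideal generated by $a$ contains brackets of every length, so nothing bounds word length by $B_{M,\mathbb{K}}$.
\item You never identify a computation that fails unless $p$ exceeds those degrees.
\end{enumerate}
The only concrete constraint your argument names is Anick's range, roughly $p\geq n/k$. The ``divided-power and restriction issues'' that are supposed to raise this to $n+kB_{M,\mathbb{K}}$ are never located.

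The case $N=n$ is likewise unsupported. Non-monogenicity is a hypothesis, not something to be exhibited, and the odd-dimensionality of the bottom cell plays no role in any step.

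To make this a proof, you would need to say which spaces are modelled over $\mathbb{K}$ and through what range of dimensions. Given the paper's description of \cite{FT}, that is presumably where a dimension-type bound such as $N$ arises. You would then need to establish the strongly free condition above within that range.
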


In \cite{T1}, an integral version of inertness was introduced. In particular, the attaching map for the top cell of $M$ is inert if $\Omega i$ has a right homotopy inverse. Rationally, inertness in this sense is equivalent to $\mathbb{Q}$-inertness. However, inertness is stronger than the analogous `$\mathbb{Z}$-inertness'. For example, $\overline{SU(3)}\simeq\Sigma\mathbb{C}P^{2}$, and if $i\colon\Sigma\mathbb{C}P^{2}\longrightarrow SU(3)$ is the inclusion then $(\Omega i)_{\ast}$ is the abelianisation of the tensor algebra $T(\rhlgy{\mathbb{C}P^{2}})$ so it is surjective in integral homology. However, $\Omega i$ does not have a right homotopy inverse, for this would imply that $\Sigma\Omega i$ has a right homotopy inverse, which when compared with the James splitting of $\Sigma\Omega\Sigma\mathbb{C}P^{2}$, implies that $\Sigma\mathbb{C}P^{2}\wedge\mathbb{C}P^{2}$ decomposes as the wedge of a three-cell complex and an $S^{7}$. But this cannot happen since the cohomology of this smash product is an indecomposable module over the mod-$2$ Steenrod algebra~\cite[Theorem 1.2]{SW}.

The inertness of the attaching map for the top cell of various families of Poincar\'{e} duality complexes has been shown. In particular, it holds for $(n-1)$-connected, $(2n)$-dimensional Poincar\'{e} duality complexes and $(n-1)$-connected, $(2n+1)$-dimensional Poincar\'{e} duality complexes under mild cohomological conditions \cite{BT2}. More generally, it has been shown for $(m-1)$-connected Poincar\'{e} duality complexes $M$  for which there is a retraction of $S^m$ off $M$ \cite{ST}. This retraction property has been shown to be closed under various operations on Poincar\'{e} duality complexes and has been used to decompose the based loop spaces of several large classes of Poincar\'{e} duality complexes~\cite{T1,ST}. 

In this paper we improve the result of F\'elix and Tanr\'e under certain conditions. For a graded $\mathbb{Q}$-module $V$ consisting of even degree elements, let $\mathrm{Sym}(V\otimes V)$ be the submodule of symmetric tensors in $V\otimes V$.

\begin{theorem}
\label{thm:localinert}
    Let $M$ be an $(m-1)$-connected, $n$-dimensional Poincar\'{e} duality complex such that $2 \leq m < n$, and let $f:S^{n-1} \to \overline{M}$ be the attaching map for the top cell. 
\begin{enumerate}
        \item If $m$ is odd and $H_m(M)$ contains a $\mathbb{Z}$-summand, then $f$ is inert when localised away from primes $p \leq \frac{n-m+3}{2}$.
        \item If $m=2m'$, let $S = \{2k \: | \: H_{2k}(M;\mathbb{Z}) \neq 0, H_{2i-1}(M;\mathbb{Z}) = 0, 1 \leq i \leq k,\: 2m \leq 2k \leq 4m-2\}$, and for $k \geq 1$, let $r_k = \mathrm{rank}(H_k(M;\mathbb{Z}))$.
     Suppose the following hold:
     \begin{enumerate}
         \item $r_m \geq 1$;
         \item $H_k(M;\mathbb{Z})$ is torsion-free for all $k \in S$; 
         \item if $\max(S)+1 \leq 4m-3$, then $H_{\max(S)+1}(M;\mathbb{Z})$ contains a $\mathbb{Z}$-summand;
         \item if $\max(S)+1 = 4m-1$, then either $H_{4m-1}(M;\mathbb{Z})$ contains a $\mathbb{Z}$-summand, or $H_{4m-1}(M;\mathbb{Z})$ does not contain a $\mathbb{Z}$-summand and the rational cup product map \[\mathrm{Sym}(H^{2m}(M;\mathbb{Q}) \otimes H^{2m}(M;\mathbb{Q})) \xrightarrow{\cup} H^{4m}(M;\mathbb{Q})\] is not an injection.
     \end{enumerate} Then $f$ is inert when localised away from primes \[p \leq \frac{n + \sum_{k \in S, k \neq \max(S)} r_{k}(k-1) + (r_{\max(S)}-1)(\max(S)-1)+3}{2} -\frac{\max(S)}{2}.\] \end{enumerate}
\end{theorem}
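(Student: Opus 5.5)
The plan is to reduce local inertness to the existence of a local retraction of a sphere off $M$. By the result of \cite{ST}, if $M$ is $(m-1)$-connected and, after localisation, there is a retraction of $S^m$ off $M$, then the attaching map for the top cell is inert. The argument of \cite{ST} is a Mather cube / fibration argument and works verbatim after localising at a set of primes. So the task is to construct, away from the stated primes, maps $S^m \to M \to S^m$, or more generally $S^k\to M\to S^k$ for a suitable $k$, whose composite is a local equivalence. If only $S^k$ with $k>m$ is available, I would use the variant of \cite{ST} or \cite{T1} that requires a retraction of a sphere whose fundamental class has a Poincaré dual.

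\textbf{Case (1), $m$ odd.} A $\mathbb{Z}$-summand in $H_m(M)$ gives, by Hurewicz, a map $S^m\to M$ that is a split injection on $H_m$. It also gives a cohomology class $x\in H^m(M)$ with $\langle x,[S^m]\rangle=1$. Regard $x$ as a map $M\to K(\mathbb{Z},m)$. Since $m$ is odd, $K(\mathbb{Z},m)$ localised away from $2$ is rationally $S^m$. Moreover $S^m_{(p)}$ agrees with $K(\mathbb{Z}_{(p)},m)$ through dimension about $m+2p-3$, because the first $p$-torsion in $\pi_*(S^m)$ occurs in degree $m+2p-3$. Since $M$ has dimension $n$, obstruction theory lifts $x$ to $M\to S^m_{(p)}$ once $n< m+2p-3$, that is, $p>(n-m+3)/2$. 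This lift gives the retraction, and the bound matches the one stated.

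\textbf{Case (2), $m=2m'$ even.} Here $S^m$ localised is no longer an Eilenberg--MacLane space in a range, since $\pi_{2m-1}(S^m)$ has a $\mathbb{Z}$. So retracting $S^m$ directly is obstructed by the cup square $x^2$. Instead, I would build the retraction in stages using the homology in $S$. The torsion-freeness in (b), together with the vanishing of odd homology below each $k\in S$, lets me map $M$ locally into a product or wedge of spheres, or rather into a finite Postnikov-type approximation. The conditions (c) and (d) are then used to find a class of odd degree $\max(S)+1$ with a $\mathbb{Z}$-summand, which gives an odd sphere $S^{\max(S)+1}$. For that odd sphere the argument of Case (1) applies, with connectivity bound $\max(S)$. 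In the alternative part of (d), the failure of injectivity of the rational cup product on $\mathrm{Sym}$ lets me kill the obstruction in degree $4m-1$ coming from the Whitehead product.

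The errors from the lower even classes enter through the dimensions of the spaces being mapped. Each class of degree $k\in S$ contributes roughly $r_k(k-1)$ to the effective dimension, and the top degree contributes $(r_{\max S}-1)(\max S-1)$. This should produce the displayed bound of the form $p>(N-\max S+3)/2$, where $N$ is the effective dimension. The resulting sphere retraction is then fed into the inertness result of \cite{ST}.

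I expect the main obstacle to be Case (2): identifying precisely which sphere retracts off $M$, or off an auxiliary space, and keeping track of how the even-degree cohomology inflates the dimension in the obstruction-theory count.
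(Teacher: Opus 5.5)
Your case (1) is fine. Lifting the dual class $M\to K(\mathbb{Z},m)$ through $S^m$ when $n<m+2p-3$, and feeding the resulting retraction of the bottom sphere into \cite{ST}, is the explicit-retraction argument of \cite{T2}; the paper simply quotes it as Proposition~\ref{prop:evenconnectivitycase}.

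Case (2) has a genuine gap: you never produce a space and a sphere to which that criterion applies. Write $M$ as $(2m-1)$-connected, as in Section~\ref{sec:localinert}. You propose to retract an odd sphere $S^{\max(S)+1}$ off $M$ itself. In the second alternative of (d) this is impossible: $H_{4m-1}(M;\mathbb{Z})$ has no $\mathbb{Z}$-summand, so $H^{4m-1}(M;\mathbb{Q})=0$, and no $S^{4m-1}$ retracts off $M$, even rationally. Non-injectivity of the cup product does not ``kill an obstruction''. Via \cite{L} it produces a nonzero class in $\pi_{4m-1}(M)\otimes\mathbb{Q}$ that is invisible in $H_*(M;\mathbb{Q})$; it becomes a spherical homology class only after the degree-$2m$ classes have been removed. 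Even under (c), where a local retraction of $S^{\max(S)+1}$ off $M$ might exist, that sphere is not the bottom cell of $M$, whereas the criterion of \cite{ST} is for the bottom sphere. The ``variant'' you invoke is neither identified nor justified, and ``mapping $M$ into a Postnikov-type approximation'' is not a construction. Your count that each $k\in S$ contributes $r_k(k-1)$ is likewise read off from the statement rather than derived.

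The missing idea is to change the space rather than the sphere. Take a generator of a $\mathbb{Z}$-summand of $H_{2m}(M)$. Away from $p\le\frac{n+3}{2}-m$, its dual class $M\to K(\mathbb{Z},2m)$ lifts through $\Omega S^{2m+1}$ (Lemma~\ref{lem:liftthroughloop}). Take the homotopy fibre of the lift, pass to a skeleton, and use the high connectivity of the fibre of the double suspension. This gives a fibration $S^{2m-1}\xrightarrow{\alpha}N\to M$ with $\alpha$ null homotopic, where $N$ is a Poincar\'{e} duality complex of dimension $n+2m-1$ with one fewer $\mathbb{Z}$ in degree $2m$ (Proposition~\ref{localfib}). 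Theorem~\ref{fibinert} then shows that the top-cell attaching maps of $N$ and $M$ are inert simultaneously; it uses $\Omega M\simeq S^{2m-1}\times\Omega N$ and $\widehat{N}\simeq S^{n-1}\vee\overline{N}$ from \cite{CT}.

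Iterating over all $\mathbb{Z}$-summands in degrees $k\in S$ gives a $\max(S)$-connected complex of dimension $n+\sum_{k\in S}r_k(k-1)$; this is where the $r_k(k-1)$ terms really come from. Its bottom homology sits in the odd degree $\max(S)+1$. Conditions (c) and (d) give a $\mathbb{Z}$-summand there (Lemmas~\ref{lem:ReduceN} and~\ref{lem:NZsummand}):
\begin{enumerate}
\item under (c), via $H_{\max(S)+1}(N)\cong H_{\max(S)+1}(M)$;
\item under (d), via $\pi_{4m-1}(N)\otimes\mathbb{Q}\cong\pi_{4m-1}(M)\otimes\mathbb{Q}$ together with Hurewicz and \cite{L}.
\end{enumerate}
Case (1) then applies to $N$, where the sphere really is the bottom cell.

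Incidentally, the displayed prime bound is the threshold of Proposition~\ref{localfib} at the last fibration step. The final application of case (1) to $N$, as written, requires $p>(\dim N-\max(S)+2)/2$. That agrees with the displayed bound only when $\max(S)=2$.
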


Some remarks should be made about Theorem \ref{thm:localinert}. The case when $m$ is odd was proved in \cite[Theorem 6.3]{T2}, the new result is the case when $m$ is even. The method for proving Theorem~\ref{thm:localinert} is completely different to that in~\cite{FT}, which used $p$-local models for spaces, analogous to rational models, that exist through a dimensional range. Theorem~\ref{thm:localinert}~(1) was proved in~\cite{T2} using explicit retractions. The approach to proving Theorem \ref{thm:localinert}~(2) is to define an auxiliary Poincar\'{e} duality complex $N$ that is of even connectivity, and whose inertness is equivalent to that of $M$. This allows us to apply Theorem \ref{thm:localinert}~(1). 


This approach is used in a different way in Section \ref{sec:6dim} to prove more than just inertness in a specific case. We decompose $\Omega M$ when $M$ is a simply-connected, $6$-dimensional Poincar\'{e} duality complex when $H_2(M;\mathbb{Z}) \cong \mathbb{Z} \oplus \mathbb{Z}$ and the auxiliary complex $N$ has admits a retraction of $S^{3}$. This retraction turns out to always hold after localising away from $2$ and $3$. With the additional assumptions that $H_{\ast}(M;\mathbb{Z})$ is torsion-free and $H_{3}(M;\mathbb{Z})\cong 0$, we give a refined decomposition of $\Omega M$ in which the factors depend only on spheres and Moore spaces.

Finally, we give two applications of Theorem~\ref{thm:localinert}. The first is to Moore's Conjecture. Combining inertness with a result of two of the authors~\cite{HT2} on torsion growth in homotopy groups, we show that after localising away from an explicit finite set of primes, $\Omega\overline{M}$ has the loop space of a wedge of two spheres as a retract. It follows that the $(n-1)$-skeleton $\overline{M}$ satisfies the hyperbolic form of Moore's Conjecture at all remaining primes, and is $\mathbb{Z}/p^r\mathbb{Z}$-hyperbolic for all $r\geq 1$. This gives further evidence for two conjectures on local hyperbolicity, namely \cite[Conjecture~1.6]{HT2} and \cite[Conjecture~1.7]{H2}, which are partial strengthenings of the hyperbolic direction of Moore's Conjecture. The second application concerns non-inertness. While inertness has received considerable attention, comparatively little work has been devoted to non-inertness and to the gaps between rational, local and integral inertness; see~\cite{H3}. Combining the Cohen--Neisendorfer construction~\cite{CN,GHMTW} with Burklund and Senger's subexponential growth theorem for spheres~\cite{BS}, we show that if \(\alpha\colon S^{b-1}\to S^a\) is \(p\)-local, \(3\leq a<b\) with $a,b$ odd and \(p>(b+3)/2\), then \(\alpha\) is not inert at \(p\). 

The paper is structured as follows. In Section \ref{sec:fibrations}, we compare the inert property for two Poincar\'{e} duality complexes related by a certain homotopy fibration. This is applied in Section \ref{sec:localinert} to prove Theorem \ref{thm:localinert}. Section \ref{sec:6dim} establishes the loop space decomposition of specific $6$-dimensional Poincar\'{e} duality complexes. Section~\ref{sec:mooreskeleton} applies Theorem~\ref{thm:localinert} to Moore's Conjecture for the $(n-1)$-skeleton $\overline{M}$, and Section~\ref{sec:CNnoninert} gives examples of non-inert maps between spheres. 
\smallskip 

\subsubsection*{Acknowledgements} The authors would like to thank the Heilbronn Institute for Mathematical Research for supporting this project through a Focused Research Group award. The first, second and third authors would like to thank the University of Southampton for its generous hospitality in supporting a research visit during which this work was discussed. The second author was supported by the Heilbronn Institute for Mathematical Research during preparation of this work. The third author was supported in part by the National Natural Science Foundation of China (Grant nos. 12331003 and 12288201) and the National Key R\&D Program of China (No. 2021YFA1002300). The fourth author was supported by EPSRC grant EP/Z534894/1.

\section{Inertness and fibrations}
\label{sec:fibrations}

Let $M$ be a simply-connected $n$-dimensional Poincar\'{e} duality complex. By Poincar\'{e} 
duality, $H_{n}(M;\mathbb{Z})\cong\mathbb{Z}$. As $M$ is simply-connected,  
it can be given a $CW$-structure with a single cell in dimension $n$. Let $\overline{M}$ be 
the $(n-1)$-skeleton of $M$. Then there is a homotopy cofibration 
\[\nameddright{S^{n-1}}{f}{\overline{M}}{i}{M}\] 
where $f$ is the attaching map for the $n$-cell and $i$ is the skeletal inclusion. 

Recall from the introduction that the attaching map $f$ is \emph{inert} if $\Omega i$ has a right homotopy inverse. In this section, we show that given a homotopy fibration of simply-connected Poincar\'{e} duality complexes, inertness of the attaching maps of the total space and base space are equivalent under certain conditions. For a given $(2m-1)$-connected, Poincar\'{e} duality complex $M$, we construct such a homotopy fibration localised away from a finite set of primes, in which $M$ is the base space. 

\begin{theorem} 
   \label{fibinert} 
   Suppose that there is a homotopy fibration 
   \(\nameddright{S^{2m-1}}{\alpha}{N}{}{M}\) 
   where $M$ and $N$ are Poincar\'{e} duality complexes, $M$ is $(2m-1)$-connected 
   and $n$-dimensional for $n>2m$, and $\alpha$ is null homotopic. Then the attaching map 
   for the top cell of~$N$ is inert if and only if the attaching map for the top cell of $M$ is inert. 
\end{theorem}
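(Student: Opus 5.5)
The plan is to exploit the null homotopy of $\alpha$ twice: once over $M$ and once over its $(n-1)$-skeleton, and then compare the skeleta of $N$ with the pullback of the fibration. Write $\pi\colon N\to M$ for the projection. Since $\alpha$ is null homotopic, the standard argument gives a homotopy equivalence $\Omega M\simeq \Omega N\times S^{2m-1}$. Concretely, the connecting map $\partial\colon\Omega M\to S^{2m-1}$ has a right homotopy inverse, and $\Omega\pi$ is the fibre inclusion of $\partial$. Note that $N$ has dimension $n+2m-1$.

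Next, pull the fibration back along the skeletal inclusion $i\colon\overline{M}\to M$. This gives a homotopy fibration $S^{2m-1}\to N'\to\overline{M}$ with $N'=\pi^{-1}(\overline{M})$. Cellularly,
\[
N = N'\cup e^{n}\cup e^{n+2m-1},
\]
coming from the product cell structure over the top cell $e^n$ of $M$. In particular $N'\subseteq\overline{N}$, and $\overline{N}=N'\cup e^{n}$.

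I then check that the fibre inclusion $S^{2m-1}\to N'$ is null homotopic. The pair $(N,N')$ is $(n-1)$-connected and $n-1\geq 2m$. So a null homotopy $S^{2m-1}\times I\to N$ compresses into $N'$ by cellular approximation. Hence $\Omega\overline{M}\simeq\Omega N'\times S^{2m-1}$. These splittings are natural with respect to the map of fibrations $N'\to N$ over $i$. Under them, $\Omega i$ corresponds to $\Omega j'\times 1$, where $j'\colon N'\to N$ is the inclusion. Therefore $\Omega i$ has a right homotopy inverse if and only if $\Omega j'$ does. The naturality should be arranged by choosing the right inverse of $\partial$ for $\overline{M}$ and then composing into $M$.

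It remains to show that $\Omega j'$ has a right homotopy inverse if and only if $\Omega j$ does, where $j\colon\overline{N}\to N$ is the skeletal inclusion. Since $j'$ factors through $j$, a right inverse for $\Omega j'$ gives one for $\Omega j$; this settles the direction ``$M$ inert implies $N$ inert''. The converse is the main obstacle. Here $\overline{N}=N'\cup e^n$, and I must show the extra $n$-cell can be avoided at the level of loop spaces.

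My first approach is to compose a right inverse $s\colon\Omega N\to\Omega\overline{N}$ of $\Omega j$ with $\Omega(\pi|_{\overline{N}})\colon\Omega\overline{N}\to\Omega M$ and the splitting $\Omega M\simeq\Omega N\times S^{2m-1}$. The aim is to produce a right inverse for $\Omega i$ directly, using a Mayer--Vietoris/cube argument on the pushout $\overline{N}=N'\cup_{g}e^n$. The attaching map $g$ should be a lift to $N'$ of $f$, possible by the null homotopy over $\overline{M}$.

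If that fails, I would instead use the characterisation of inertness via homotopy fibres: $f$ is inert if and only if the homotopy fibre of $i$ is $\Omega M\ltimes S^{n-1}$ with the canonical map. The analogous statement holds for $N$. I would then compare the homotopy fibres of $\overline{M}\to M$ and $\overline{N}\to N$. These fibres are related by the fibration with fibre $S^{2m-1}$, which is trivial after looping.
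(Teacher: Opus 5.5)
Most of your reduction is correct, and parts of it are more elementary than the paper's. The compression argument, using that $(N,N')$ is $(n-1)$-connected and $2m\le n-1$, shows directly that the fibre inclusion $S^{2m-1}\to N'$ is null; the paper cites \cite[Lemma~3.2]{CT} for this. The compatible splittings $\Omega\overline{M}\simeq S^{2m-1}\times\Omega N'$ and $\Omega M\simeq S^{2m-1}\times\Omega N$ do show that $\Omega i$ has a right homotopy inverse if and only if $\Omega j'$ does. The factorisation $j'=j\circ k$ through $\overline{N}=N'\cup e^{n}$ does give ``$M$ inert $\Rightarrow$ $N$ inert''. The paper gets the analogous map $r$ from the wedge decomposition described below, whereas your cell structure gives it for free.

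\textbf{The gap is the converse, ``$N$ inert $\Rightarrow$ $M$ inert'', which you leave open.} To turn a right inverse $s$ of $\Omega j$ into one for $\Omega j'$, you need a map $\rho\colon\overline{N}\to N'$ with $j'\circ\rho\simeq j$. Equivalently, by the pullback property of $N'$, the composite $\pi\circ j\colon\overline{N}\to M$ must compress into $\overline{M}$. Given $\rho$, the composite $\Omega\rho\circ s$ is the required right inverse.

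Neither of your sketches produces $\rho$.
\begin{itemize}
\item In the first, $\Omega(\pi|_{\overline{N}})\circ s\simeq\Omega\pi\circ\Omega j\circ s\simeq\Omega\pi$. You simply recover $\Omega\pi$, which lands in $\Omega M$. The whole difficulty is that $\pi$ carries the extra $n$-cell of $\overline{N}$ over the top cell of $M$, and you do not specify any Mayer--Vietoris or cube argument that removes it.
\item The second hits the same wall. Since $\mathrm{hofib}(\overline{M}\to M)\simeq\mathrm{hofib}(N'\to N)$, comparing this with $\mathrm{hofib}(\overline{N}\to N)$ again requires knowing how the $n$-cell sits on $N'$.
\end{itemize}

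The missing ingredient, which the paper imports from \cite[Proposition~4.4]{CT}, is a homotopy equivalence $N'\simeq S^{n-1}\vee\overline{N}$. Under it, $j'$ restricts to $j$ on $\overline{N}$ and is null on $S^{n-1}$. In your terms, the lifted attaching map $g\colon S^{n-1}\to N'$ of the extra $n$-cell is, up to homotopy, the inclusion of a wedge summand. Attaching $e^{n}$ then just cancels that summand, and the inclusion of the complementary summand $\overline{N}$ is the map $\rho$.

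Heuristically, $N'$ is the complement of a fibre sphere. Because that sphere is null homotopic, it behaves like an unknotted $S^{2m-1}$ in a ball, whose complement contributes a meridian $S^{n-1}$. This splitting is a genuine statement about the attaching map $g$, not a formal consequence of your cell structure or of the splittings you set up. Until it is proved, ``$N$ inert $\Rightarrow$ $M$ inert'' is not established.
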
 

\begin{proof} 
The first part of the proof sets the stage. Denote the map 
\(\namedright{N}{}{M}\) 
by $g$. Define the space~$\widehat{N}$ and the maps $\widehat{g}$ and $\widehat{i}$ by the 
homotopy pullback 
\[\diagram 
      \widehat{N}\rto^-{\widehat{g}}\dto^{\widehat{i}} & \overline{M}\dto^{i} \\ 
      N\rto^-{g} & M. 
 \enddiagram\]  
This induces a homotopy fibration diagram 
\begin{equation} 
  \label{Nhatdgrm} 
  \diagram 
      \Omega\overline{M}\rto^-{\widehat{\partial}}\dto^{\Omega i} & S^{2m-1}\rto^-{\widehat{\alpha}}\ddouble 
         & \widehat{N}\rto^-{\widehat{g}}\dto^{\widehat{i}} & \overline{M}\dto^{i} \\ 
      \Omega M\rto^-{\partial} & S^{2m-1}\rto^-{\alpha} & N\rto^-{g} & M 
   \enddiagram 
\end{equation} 
that defines the map $\widehat{\alpha}$ and the connecting maps $\widehat{\partial}$ and $\partial$. 
Since $M$ is $(2m-1)$-connected and $n$-dimensional for $n>2m$, the space $\overline{M}$ 
is not contractible. Therefore, by~\cite[Lemma 3.2]{CT}, the map $\widehat{\alpha}$ is null homotopic. This 
implies that $\widehat{\partial}$ has a right homotopy inverse 
\[\widehat{s}\colon\namedright{S^{2m-1}}{}{\Omega\overline{M}}.\] 
The homotopy commutativity of the left square in~(\ref{Nhatdgrm}) then implies that the composite 
\[s\colon\nameddright{S^{2m-1}}{\widehat{s}}{\Omega\overline{M}}{\Omega i}{\Omega M}\] 
is a right homotopy inverse for $\partial$. Consider the diagram 
\begin{equation} 
  \label{eehat} 
  \diagram 
     S^{2m-1}\times\Omega\widehat{N} 
          \rto^-{\widehat{s}\times\Omega\widehat{g}}\dto^{1\times\Omega\widehat{i}} 
       & \Omega\overline{M}\times\Omega\overline{M}\rto^-{\mu}\dto^{\Omega i\times\Omega i} 
       & \Omega\overline{M}\dto^{\Omega i} \\ 
     S^{2m-1}\times\Omega N\rto^-{s\times\Omega g} & \Omega M\times\Omega M\rto^-{\mu} 
       & \Omega M 
  \enddiagram 
\end{equation}  
where $\mu$ is the standard loop multiplication. The left square homotopy commutes by 
definition of~$s$ and the loops on the right square in~(\ref{Nhatdgrm}). The right square 
homotopy commutes since $\Omega i$ is an $H$-map. Let 
$\widehat{e}=\mu\circ(\widehat{s}\times\Omega\widehat{g})$ and $e=\mu\circ(s\times\Omega g)$ 
respectively be the composites along the top and bottom rows of this diagram. Observe that $\widehat{e}$ 
is a homotopy equivalence since $\widehat{s}$ is a right homotopy inverse for $\widehat{\partial}$, 
and $e$ is a homotopy equivalence since $s$ is a right homotopy inverse for $\partial$. 

Since $M$ is $n$-dimensional, the homotopy fibration 
\(\nameddright{S^{2m-1}}{\alpha}{N}{g}{M}\) 
implies that $N$ is $(n+2m-1)$-dimensional \cite{Q}. Let $\overline{N}$ be the $(n+2m-2)$-skeleton 
of $N$. Then there is a homotopy cofibration 
\[\nameddright{S^{n+2m-2}}{f'}{\overline{N}}{i'}{N}\] 
where $f'$ is the attaching map of the $(n+2m-1)$-cell of $N$ and $i'$ is the skeletal inclusion. 
By~\cite[Proposition 4.4]{CT}, there is a homotopy equivalence 
$\widehat{N}\simeq S^{n-1}\vee\overline{N}$, and there is a 
homotopy commutative square 
\begin{equation} 
  \label{NhatNbar} 
  \diagram 
     S^{n-1}\vee\overline{N}\rto^-{\simeq}\dto^{p_{2}} & \widehat{N}\dto^{\widehat{i}} \\ 
     \overline{N}\rto^-{i'} & N 
  \enddiagram 
\end{equation}  
where $p_{2}$ is the pinch map to the second wedge summand. Let $j$ be defined as the composite 
\(j\colon\nameddright{\overline{N}}{}{S^{n-1}\vee\overline{N}}{\simeq}{\widehat{N}}\), 
where the left map is the inclusion of the second wedge summand. Then~(\ref{NhatNbar}) 
implies that $\widehat{i}\circ j\simeq i'$. Let $r$ be the composite 
\(r\colon\nameddright{\widehat{N}}{\simeq}{S^{n-1}\vee\overline{N}}{p_{2}}{\overline{N}}\), 
where the left map is the inverse homotopy equivalence. Then~(\ref{NhatNbar}) implies 
that $\widehat{i}\simeq i'\circ r$. 

Suppose that the attaching map for the top cell of $N$ is inert. Then 
\(\namedright{\Omega\overline{N}}{\Omega i'}{\Omega N}\) 
has a right homotopy inverse 
\(t'\colon\namedright{\Omega N}{}{\Omega\overline{N}}\). 
Consider the diagram 
\[\diagram 
    S^{2m-1}\times\Omega N\rto^-{1\times t'}\drrdouble
       & S^{2m-1}\times\Omega\overline{N}\rto^-{1\times \Omega j}\drto^{1\times\Omega i'}    
       & S^{2m-1}\times\Omega\widehat{N}\rto^-{\widehat{e}}\dto^{1\times\Omega\widehat{i}}  
       & \Omega\overline{M}\dto^{\Omega i} \\  
     & & S^{2m-1}\times\Omega N\rto^-{e} & \Omega M. 
  \enddiagram\] 
The left triangle homotopy commutes since $t'$ is a right homotopy inverse for $\Omega i'$, 
the middle triangle homotopy commutes since $i'\simeq\widehat{i}\circ j$, and the right 
square homotopy commutes by~(\ref{eehat}) and the definitions of $\widehat{e}$ and $e$. 
Therefore the whole diagram homotopy commutes, implying that $e$ factors 
through $\Omega i$. As $e$ is a homotopy equivalence, this implies that $\Omega i$ has 
a right homotopy inverse. Hence the attaching map for the $n$-cell of $M$ is inert. 

Suppose that the attaching map for the top cell of $M$ is inert. Then 
\(\namedright{\Omega\overline{M}}{i}{\Omega M}\) 
has a right homotopy inverse 
\(t\colon\namedright{\Omega M}{}{\Omega\overline{M}}\). 
The loops of the homotopy pullback defining $\widehat{N}$ is itself a homotopy pullback, so there 
is an induced map $\widehat{t}$ to the homotopy pullback 
\[\xymatrix{ 
    \Omega N\ar[r]^-{\Omega g}\ar@{.>}[dr]^{\widehat{t}}\ar@/_/[ddr]_{=} & \Omega M\ar@/^/[dr]^{t} & \\ 
    & \Omega\widehat{N}\ar[r]^-{\Omega\widehat{g}}\ar[d]^{\Omega\widehat{i}} 
      & \Omega\overline{M}\ar[d]^{\Omega i} \\ 
   & \Omega N\ar[r]^-{\Omega g} & \Omega M. 
 }\] 
Combining the lower left triangle with the homotopy $\widehat{i}\simeq i'\circ r$, we obtain 
a homotopy commutative diagram 
\[\diagram 
    \Omega N\rto^-{\widehat{t}}\drdouble & \Omega\widehat{N}\rto^-{\Omega r}\dto^{\Omega\widehat{i}} 
        & \Omega\overline{N}\dto^{\Omega i'} \\ 
    & \Omega N\rdouble & \Omega N. 
  \enddiagram\] 
Thus $\Omega i'$ has a right homotopy inverse, implying that the attaching map for the top cell 
of $N$ is inert. 
\end{proof} 

\begin{remark} 
Theorem~\ref{fibinert} is a variation on a statement proved in~\cite[Theorem 1.6 (1)]{H}, where 
the hypotheses were different. In that case $M$ and $N$ were smooth orientable manifolds instead 
of Poincar\'{e} duality complexes, there was a spherical bundle 
\(\nameddright{S^{2m-1}}{}{N}{}{M}\) 
instead of a homotopy fibration, and the value of $m$ was restricted to $m\in\{1,2,4\}$. Note that 
this restriction on~$m$ was motivated by the fact that $g$ being null homotopic implies that 
$S^{2m-1}$ retracts off $\Omega M$ and so is an $H$-space. Localised away from $2$, any odd 
dimensional sphere is an $H$-space, so Theorem~\ref{fibinert} applies for any positive integer $m$. In all these ways, Theorem~\ref{fibinert} generalises~\cite[Theorem 1.6~(1)]{H}. 
On the other hand, in that case $M$ need not have been $(2m-1)$-connected, so this 
hypothesis in our case is weaker. 
\end{remark} 

Next, given a $(2m-1)$-connected, $n$-dimensional Poincar\'{e} duality complex $M$ with $n > 2m$, we show that we can construct a homotopy fibration satisfying the hypotheses of Theorem \ref{fibinert} localised away from a finite set of primes, in which $M$ is the base space. In particular, we will prove the following. 

\begin{proposition} 
   \label{localfib} 
   Let $M$ be a $(2m-1)$-connected, $n$-dimensional Poincar\'{e} duality complex with a class 
   $x\in H_{2m}(M;\mathbb{Z})$ that generates a primitive $\mathbb{Z}$-summand. 
   Then after localizing away from primes $p\leq\frac{n+3}{2}-m$, there is a homotopy fibration 
   \(\nameddright{S^{2m-1}}{\alpha}{N}{g}{M}\) 
   where \begin{enumerate}
       \item $x$ is in the cokernel of $g_{\ast}$,
       \item $\alpha$ is null homotopic,
       \item $N$ is a $(2m-1)$-connected, $(n+2m-1)$-dimensional Poincar\'{e} duality complex,
       \item $\mathrm{rank}(H_{2m}(N)) = \mathrm{rank}(H_{2m}(M))-1$,
   \end{enumerate}
\end{proposition}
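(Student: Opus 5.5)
The plan is to realise $N$ as the homotopy fibre of a map $M\to K(\mathbb{Z},2m)$, suitably replaced after localisation by a map to the $2m$-sphere's ``Eilenberg--MacLane approximation'' whose fibre is $S^{2m-1}$.

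\textbf{Step 1: construct the map.} Since $x$ generates a primitive $\mathbb{Z}$-summand of $H_{2m}(M;\mathbb{Z})\cong\pi_{2m}(M)$ (Hurewicz), choose a dual class $y\in H^{2m}(M;\mathbb{Z})$ with $\langle y,x\rangle=1$. This is represented by $\phi\colon M\to K(\mathbb{Z},2m)$. Localised at a prime $p$, the space $K(\mathbb{Z},2m)$ agrees with $\Omega S^{2m+1}$ through a range, and more to the point there is a $p$-local fibration
\[S^{2m-1}\to \Omega S^{2m+1}\{?\}\]
which I would avoid. Instead, I would use the standard $p$-local fibration $S^{2m-1}\to S^{4m-1}\{\text{}\}$. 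The cleaner route is this: $\Omega S^{2m+1}_{(p)}$ has $\mathbb{C}P^\infty$-type behaviour. There is a $p$-local space $B$, namely the $p$-local ``sphere bundle classifier'' with $\Omega B\simeq S^{2m-1}$ and $H^*(B)=\mathbb{Z}_{(p)}[y]$ through degrees $<2mp$; such a $B$ exists for odd spheres localised at odd $p$ (Sullivan/Rector: $S^{2m-1}_{(p)}$ is a loop space iff $m\mid p-1$). To avoid that restriction, I would instead obstruct cell by cell. Since $\dim M=n$, I only need a map $M\to B_{k}$ to a finite Postnikov-like approximation $B_k$ of $BS^{2m-1}$ whose fibre agrees with $S^{2m-1}$ through dimension $n$. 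Obstructions to lifting $\phi$ lie in $H^{j+1}(M;\pi_j(\mathrm{fibre}))$, and the relevant $p$-local groups are $\pi_j(S^{2m-1})$ shifted. These vanish for $j<2m-1+2p-3$, and $H^{j+1}(M)=0$ for $j+1>n$. So $p>\frac{n+3}{2}-m$, that is $2m+2p-3>n$, ensures that no obstructions arise. This is exactly the bound in Proposition~\ref{localfib}.

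\textbf{Step 2: identify $N$.} Take $g\colon N\to M$ to be the homotopy fibre of this map, so that there is a fibration $S^{2m-1}\xrightarrow{\alpha}N\xrightarrow{g}M$ in the relevant range. The connecting map $\Omega M\to S^{2m-1}$ is, on $\pi_{2m-1}$, given by evaluating $y$ on $\pi_{2m}(M)$. It therefore sends $x$ to a generator, so $\partial$ has a right homotopy inverse on the bottom cell. Because $S^{2m-1}$ is a $p$-local $H$-space for $p$ odd, this extends to a right inverse of $\partial$, and hence $\alpha$ is null homotopic, giving (2). The long exact sequence in homotopy then shows $\pi_{2m}(N)\to\pi_{2m}(M)$ is the kernel of $y$, which misses $x$. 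Combined with Hurewicz, $N$ is $(2m-1)$-connected and has $\mathrm{rank}\,H_{2m}(N)=\mathrm{rank}\,H_{2m}(M)-1$, with $x\notin\mathrm{im}\, g_*$. This gives (1), (4) and the connectivity in (3).

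\textbf{Step 3: Poincar\'e duality.} Since the fibre $S^{2m-1}$ is a Poincar\'e duality complex and the fibration is orientable ($M$ is simply connected), the total space $N$ is a Poincar\'e duality complex of dimension $n+2m-1$ by Gottlieb/Quinn~\cite{Q}. This completes (3).

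\textbf{Main obstacle.} I expect the hard part to be Step 1: producing a genuine fibration with fibre exactly $S^{2m-1}$ rather than an approximation. My first attempt would be the obstruction theory above, carried out against the Postnikov tower of $BS^{2m-1}_{(p)}$-like spaces. The point to check is that the first $p$-torsion in $\pi_*(S^{2m-1})$, in degree $2m-1+2p-3$, is the only constraint, and that the needed classifying object exists in that range.
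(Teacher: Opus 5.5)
There is a genuine gap, and it is the one you flag as the main obstacle. Nothing in your argument produces a fibration whose fibre is \emph{exactly} $S^{2m-1}$, and Steps 2 and 3 both need this.

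You define $N$ as the homotopy fibre of a map $M\to B$, so the fibre of $g$ is $\Omega B$. You would therefore need a genuine delooping of the localised $S^{2m-1}$. That exists for $m=1,2$ (take $B=\mathbb{C}P^\infty$ or $\mathbb{H}P^\infty$) but not in general, as you note yourself. Your obstruction count is fine: it is essentially Lemma~\ref{lem:liftthroughloop}, with $\Omega S^{2m+1}$ as the natural approximating target and exactly the stated prime bound. But then the fibre of $N'\to M$ is $\Omega^2S^{2m+1}$, which agrees with $S^{2m-1}$ only through a range, since the fibre of the double suspension is $(2mp-4)$-connected at $p$. No other approximation $B_k$ can do better, because $\Omega B_k\simeq S^{2m-1}$ would itself be a delooping.

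A fibration that is $S^{2m-1}\to N\to M$ only \emph{in the relevant range} does not support Step 3. Quinn's theorem needs a finite Poincar\'e duality fibre, and you have no control over the high-dimensional homology of the total space. So you cannot conclude that $N$ is an $(n+2m-1)$-dimensional Poincar\'e duality complex. Step 2 also uses that the fibre is the sphere itself.

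The missing idea is to truncate the total space rather than the base. This gives a non-principal fibration and so sidesteps the delooping problem entirely.
\begin{enumerate}
\item Let $N'$ be the homotopy fibre of the lift $r'\colon M\to\Omega S^{2m+1}$, let $N$ be the $(n+2m-1)$-skeleton of $N'$, and let $g$ be the composite $N\to N'\to M$.
\item The homotopy fibre $F$ of $g$ maps to $\Omega^2S^{2m+1}$, and the fibre of that map is the fibre of the skeletal inclusion. The paper uses the Serre spectral sequence to see that the $(n+2m-1)$- and $(n+2m)$-skeleta of $N'$ agree. Hence $H_*(F)\cong H_*(S^{2m-1})$ through degree $n+2m-1$.
\item Let $t>n+2m-1$ be the least degree with $H_t(F)\neq 0$, and take $z\in H_t(F)$, which sits in $E^2_{0,t}$ of the Serre spectral sequence of $F\to N\to M$. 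A differential hitting $z$ would have to start in $E_{t+1,0}$ or $E_{t-2m+2,\,2m-1}$. Both vanish because $M$ is $n$-dimensional and $t>n+2m-1$.
\item So $z$ survives to $H_t(N)$. But $H_t(N)=0$, since $\dim N\leq n+2m-1<t$, a contradiction.
\item Hence $H_*(F)\cong H_*(S^{2m-1})$, and $F\simeq S^{2m-1}$ by Whitehead's theorem.
\end{enumerate}
With this genuine fibration, $N$ is finite and Quinn's theorem applies. Your long exact sequence argument from Step 2 then gives that $N$ is $(2m-1)$-connected, hence $\alpha\simeq *$, together with conclusions (1) and (4).
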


The proof of Proposition \ref{localfib} will proceed in a few steps. Let $M$ be a $(2m-1)$-connected $n$-dimensional Poincar\'{e} duality complex with a class $x\in H_{2m}(M;\mathbb{Z})$ that generates a primitive $\mathbb{Z}$-summand. The class $y \in H^{2m}(M;\mathbb{Z})$ dual to $x$ is represented by a map $r:M \to K(\mathbb{Z},2m)$. We first show a factorisation of $r$ through $\Omega S^{2m+1}$ after localisation away from a finite set of primes. 

\begin{lemma}
\label{lem:liftthroughloop}
    Localise away from primes $p\leq\frac{n+3}{2}-m$. Then there exists a map $r':M \to \Omega S^{2m+1}$ and a homotopy commutative diagram \[\begin{tikzcd}
	& {\Omega S^{2m+1}} \\
	M & {K(\mathbb{Z},2m),}
	\arrow["s", from=1-2, to=2-2]
	\arrow["{r'}", from=2-1, to=1-2]
	\arrow["r", from=2-1, to=2-2]
\end{tikzcd}\] where $s$ represents a generator $z \in H^{2m}(\Omega S^{2m+1};\mathbb{Z})$. Moreover, $(r')^{\ast}(z)=y$. 
\end{lemma}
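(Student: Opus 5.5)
Localised away from primes $p\leq\frac{n+3}{2}-m$, I plan to show that $s\colon\Omega S^{2m+1}\to K(\mathbb{Z},2m)$ is highly connected. Then obstruction theory on the finite $n$-dimensional complex $M$ will lift $r$ through $s$.

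\textbf{Step 1: connectivity of $s$.} The map $s$ is the first stage of the Postnikov tower of $\Omega S^{2m+1}$, so its homotopy fibre $F$ has $\pi_k(F)\cong\pi_{k+1}(S^{2m+1})$ for $k>2m-1$ and trivial homotopy below that range. By Serre, the first $p$-torsion in $\pi_*(S^{2m+1})$ above the bottom cell occurs in degree $2m+1+2p-3$. So $\pi_k(\Omega S^{2m+1})$ has its first $p$-torsion beyond the fundamental class in degree $2m+2p-3$. Localised away from primes $p\leq P$, the fibre $F$ is therefore $(2m+2P'-4)$-connected, where $P'$ is the least prime exceeding $P$. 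Hence $F$ has vanishing homotopy through degree at least $2m+2(\frac{n+3}{2}-m)-4=n-1$. In other words, $F$ is $(n-1)$-connected.

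\textbf{Step 2: obstruction theory.} The obstructions to lifting $r$ through the fibration $F\to\Omega S^{2m+1}\xrightarrow{s}K(\mathbb{Z},2m)$ lie in $H^{k+1}(M;\pi_k(F))$. These groups vanish for $k\leq n-1$ because $\pi_k(F)=0$, and for $k\geq n$ because $H^{k+1}(M)=0$, $M$ being $n$-dimensional. So a lift $r'$ exists with $s\circ r'\simeq r$. I should double-check the boundary case $k=n-1$, where $H^n(M)\cong\mathbb{Z}$ is nonzero. There the coefficient group $\pi_{n-1}(F)$ is zero precisely because of the bound on $p$, so this case is fine. If the numerology turns out to be off by one, I would instead use that $M$ is a Poincar\'e duality complex, so $M\simeq\overline{M}\cup e^n$. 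I would then lift on the $(n-1)$-dimensional skeleton $\overline{M}$ first and handle the top cell separately.

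\textbf{Step 3: the cohomology class.} Finally, $(r')^*(z)=(r')^*s^*(\iota)=r^*(\iota)=y$, where $\iota$ is the fundamental class and $z=s^*(\iota)$. This is immediate from the homotopy commutativity.

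The main obstacle is Step 1: pinning down the exact connectivity of $F$ in terms of the prime bound, using the first $p$-torsion result for spheres. I also need to check that localisation preserves everything in sight. Localisation is harmless because all spaces involved are simply connected.
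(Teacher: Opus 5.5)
Your proposal is correct and follows essentially the same route as the paper: Serre's result that the first $p$-torsion in $\pi_*(\Omega S^{2m+1})$ lies in degree $2m+2p-3$ shows that $s$ is highly connected away from the excluded primes, and then $r$ lifts through $s$ because $M$ is $n$-dimensional. The only differences are minor. The paper uses the strict inequality $p>\frac{n+3}{2}-m$ to get $s$ an isomorphism on $\pi_k$ for all $k\le n$, which is sharper than your $(n-1)$-connected fibre, but your bound already suffices for the obstruction argument. Also, $\pi_k(F)\cong\pi_{k+1}(S^{2m+1})$ holds for $k>2m$ rather than $k>2m-1$, since $\pi_{2m}(F)=0$.
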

\begin{proof}
Note there is only one $\mathbb{Z}$-summand in $\pi_{\ast}(\Omega S^{2m+1})$, 
occurring in dimension $2m$. For a prime $p$, the least dimensional nontrivial $p$-torsion homotopy group in $\pi_{\ast}(\Omega S^{2m+1})$ occurs in dimension $2m+2p-3$. Therefore, after localizing 
away from primes $p\leq\frac{n+3}{2}-m$, the map $s$ induces an isomorphism on homotopy 
groups in dimensions~$\leq n$, and is therefore a homotopy equivalence in that range. Thus 
as $M$ is $n$-dimensional, any map 
\(\namedright{M}{}{K(\mathbb{Z},2m)}\) 
lifts through $s$ to a map 
\(\namedright{M}{}{\Omega S^{2m+1}}\). 

In particular, let $y\in H^{2m}(M;\mathbb{Z})$ be dual to $x$ and let 
\(r\colon\namedright{M}{}{K(\mathbb{Z},2m)}\) 
represent $y$. Localise away from primes $p\leq\frac{n+3}{2}-m$. Then there is a lift 
\[\diagram 
      & \Omega S^{2m+1}\dto^{s} \\ 
      M\rto^-(0.6){r}\urto^{r'} & K(\mathbb{Z},2m) 
  \enddiagram\] 
for some map $r'$. As $r$ represents $y$ and $s$ represents $z$, we have 
$(r')^{\ast}(z)=y$. 
\end{proof}

Continuing, let $r'$ be defined as in Lemma \ref{lem:liftthroughloop}, and define the space $N'$ and the maps $g'$ and $\alpha'$ by the homotopy fibration sequence 
\begin{equation} 
  \label{loop2fib} 
  \namedddright{\Omega^{2} S^{2m+1}}{\alpha'}{N'}{g'}{M}{r'}{\Omega S^{2m+1}}. 
\end{equation} 
Lemma \ref{lem:liftthroughloop} implies $(r')^{\ast}(z)=y$, and so $(g')^{\ast}(y)=0$. This implies that in homology, $x$ is 
in the cokernel of $g'_{\ast}$. We now modify the homotopy fibration~(\ref{loop2fib}). Let 
\[E^{2}\colon\namedright{S^{2m-1}}{}{\Omega^{2} S^{2m+1}}\] 
be the double suspension, which is the double adjoint of the identity map on $S^{2m+1}$. 
Let $W_{m}$ be the homotopy fibre of $E^{2}$. It is well known that $W_{m}$ is 
rationally trivial and, when localised at a prime~$p$, that it is $(2mp-4)$-connected. Therefore, localised away from primes $p\leq\frac{n+3}{2}-m$, it follows that $W_{m}$ is 
at least $(n+2m-1)$-connected. 
Let $N$ be the $(n+2m-1)$-skeleton of $N'$ and let $g$ be the composite 
\[g\colon\nameddright{N}{}{N'}{g'}{M}\] 
where the left map is the skeletal inclusion. As $x$ is in the cokernel of $g'_{\ast}$, 
it follows that $x$ is in the cokernel of $g_{\ast}$. 
Define the spaces $F$ and $G$, and the maps $\epsilon$ 
and $\alpha$, by the homotopy fibration diagram 
\begin{equation} 
  \label{Fepsilondgrm} 
  \diagram 
     G\rdouble\dto & G\dto & \\ 
     F\rto^-{\alpha}\dto^{\epsilon} & N\rto^-{g}\dto & M\ddouble \\ 
     \Omega^{2} S^{2m+1}\rto^-{\alpha'} & N'\rto^-{g'} & M. 
  \enddiagram 
\end{equation} We show that the middle row is the homotopy fibration we require in Proposition \ref{localfib}.

\begin{proof}[Proof of Proposition \ref{localfib}]
    We will show that $F\simeq S^{2m-1}$ in \eqref{Fepsilondgrm} localised away from the primes $p\leq\frac{n+3}{2}-m$. Take homology with coefficients in $\mathbb{Z}$ localised 
away from primes $p\leq\frac{n+3}{2}-m$. First consider the Serre spectral sequence for the 
homotopy fibration 
\(\nameddright{\Omega^{2} S^{2m+1}}{\alpha'}{N'}{g'}{M}\) 
that converges to $H_{\ast}(N')$. For all bidegrees $(p,q)$ there is a module isomorphism 
\[E^{2}_{p,q}(N')\cong H_{p}(M)\otimes H_{q}(\Omega^{2} S^{2m+1}).\] 
Consider bidegrees of the form $(p,q)$ for $q\leq n+2m$. As $W_{m}$ is at least $(n+2m)$-connected, 
the restriction of 
\(\namedright{S^{2m-1}}{E^{2}}{\Omega^{2} S^{2m+1}}\) 
to $(n+2m)$-skeletons is a homotopy equivalence. Thus in bidegrees $(p,q)$ for $q\leq n+2m$, 
the $E^{2}$-page is concentrated on two horizontal 
lines in bidegrees $(p,0)$ and $(p,2m-1)$. As $M$ is $n$-dimensional, the only 
nontrivial groups occur for $p\leq n$, implying that the only possible nontrivial differentials are 
$d^{2},\ldots,d^{n}$. Applying one of these to an element of bidegree $(p,2m-1)$ gives 
an element in bidegree $(p',t)$ for some $p'$ and $2m-1<t< n+2m$, but for those values  
of $t$ all such bidegrees are zero. Thus all differentials are zero on bidegrees $(p,2m-1)$. 
On the other hand, all the elements of bidegree $(p,0)$ have $p\leq n$ 
so differentials on such elements have image in bidegrees of the form 
$(p',q)$ for some $p'$ and $0<q\leq n-1$. These groups are zero except for those of the 
form $(p',2m-1)$, implying that the only possible nontrivial differential is~$d^{2m}$. Hence 
if the spectral sequence converging to $H_{\ast}(N')$ is restricted to  
bidegrees~$(p,q)$ for $q\leq n+2m$ then the only possible nonzero differential is $d^{2m}$ 
in bidegrees $(p,0)$. 

Notice that this argument only depended on $F$ having the same homology as $S^{2m-1}$ 
in dimensions~$\leq n+2m$, so if $F$ had the same homology as $S^{2m-1}$ in dimensions~$< t$ 
for some $t>2n+m$ then the restriction of the spectral sequence to bidegrees $(p,q)$ for $q< t$ 
would have nonzero elements only in bidegrees $(p,0)$ and $(p,2m-1)$ and the only possible 
nonzero differential would be $d^{2m}$ on elements in bidegrees $(p,0)$.   

One consequence of the spectral sequence calculation is that the $(n+2m-1)$-skeleton $N$ of~$N'$ 
has the same homology as the $(n+2m)$-skeleton, so is homotopy equivalent to it. With $N$ 
being the $(n+2m)$-skeleton of $N$, the space $G$ in~(\ref{Fepsilondgrm}) is $(n+2m-1)$-connected, 
implying that the map~$\epsilon$ is a homotopy equivalence in dimensions~$\leq n+2m-1$. 
In particular, the homotopy fibration 
\(\nameddright{F}{\alpha}{N}{g}{M}\) 
has the property that if the Serre spectral sequence converging to $H_{\ast}(N)$ is restricted to  
bidegrees~$(p,q)$ for $q\leq n+2m-1$ then the only possible nonzero differential is $d^{2m}$ 
in bidegrees $(p,0)$. 

We now show that $F\simeq S^{2m-1}$ in all dimensions. If not, then there must be a 
homology class $z\in H_{t}(F)$ for some $t>n+2m-1$ that is of least degree. Consider 
the spectral sequence for the homotopy fibration 
\(\nameddright{F}{\alpha}{N}{g}{M}\) 
converging to $H_{\ast}(N)$. On the $E^{2}$-page, $z$ is in bidegree $(0,t)$. For degree 
reasons, all differentials are zero on $z$. If $z$ is in the image of a differential then $z=d^{r}(w)$ 
for some $w$ in bidegree $(p,q)$ for $q<t$. The conclusion of the previous paragraph implies 
that $q$ is either $0$ or $2m-1$ and the only possible nonzero differential on $w$ is $d^{2m}$. 
But as $p\leq n$, this implies that $z$ has total degree~$\leq n+2m-1$, whereas $z$ has 
total degree $t>n+2m-1$, a contradiction. Therefore $z$ is not in the image of a differential. 
Hence $z$ must survive the spectral sequence to give an element in $E^{\infty}$, and for 
degree reasons there can be no extension problems, so $z$ survives to an element in 
$H_{t}(N)$. But as $N$ is the $(n+2m-1)$-skeleton of $N$, it is at most $(n+2m-1)$-dimensional, 
implying that $H_{t}(N)\cong 0$ as $t>n+2m-1$. This contradiction implies that~$z$ cannot 
exist, that is, $H_{\ast}(F)\cong H_{\ast}(S^{2m-1})$ in all degrees. Thus the inclusion 
\(\namedright{S^{2m-1}}{}{F}\) 
of the bottom cell induces an isomorphism in homology in all degrees, so as spaces are 
simply-connected, this inclusion must be a homotopy equivalence by Whitehead's Theorem. 

Consequently, as $F$ is defined as the homotopy fibre of $g$ in~(\ref{Fepsilondgrm}), 
we obtain a homotopy fibration 
\(\nameddright{S^{2m-1}}{}{N}{g}{M}\). 
Finally, the Serre spectral sequence converging to $H_{\ast}(N)$ has $E^{2}$-page equal to zero in 
total degrees~$\leq 2m-1$, which implies that $N$ is $(2m-1)$-connected. Thus the map 
\(\namedright{S^{2m-1}}{}{N}\) 
is null homotopic. 

The fact that $N$ is a Poincar\'{e} duality complex follows from \cite{Q}. The assertion about homology follows from the fact that $x$ is in the cokernel of $g_*$.
\end{proof}

\begin{remark}
    If $M$ is simply-connected, then there is an improvement. Let $x \in H_2(M;\mathbb{Z})$ generate a $\mathbb{Z}$-summand, and let $y \in H^2(M;\mathbb{Z})$ be the dual of $x$. Then $y$ is represented by a map $r:M \to BS^1$. Integrally, we obtain a homotopy fibration sequence \[\Omega M \xrightarrow{\partial}S^1 \xrightarrow{\alpha} N \to M \to BS^1.\]

    By the Hurewicz isomorphism $x$ corresponds to a generator of $\pi_1(\Omega M)$. In particular, there is a map $S^1 \to \Omega M$ such that the composite $S^1 \to \Omega M \xrightarrow{\partial} S^1$ is a homotopy equivalence. Hence, $\partial$ has a right homotopy inverse, and so $\alpha$ is null homotopic. Theorem \ref{fibinert} then implies the attaching map for the top cell of $N$ is inert if and only if the attaching map for the top cell of $M$ is inert.
\end{remark}

\section{Local inertness}
\label{sec:localinert}

In this section, we prove Theorem \ref{thm:localinert}. 
Let $M$ be an $(m-1)$-connected, $n$-dimensional Poincar\'{e} duality complex with $m \geq 1$. When $m$ is odd, Theorem \ref{thm:localinert} was proved in \cite[Theorem 6.3]{T2}.

\begin{proposition}
\label{prop:evenconnectivitycase}
    Let $M$ be an $(m-1)$-connected, $n$ dimensional Poincar\'{e} duality complex with $m$ odd. If $H_m(M)$ has a $\mathbb{Z}$-summand, then localised away from primes $p \leq \frac{n-m+3}{2}$, the attaching map for the top cell is inert. \qed 
\end{proposition}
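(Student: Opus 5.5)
The plan is to build a retraction of $S^{m}$ off $M$ after localisation, and then invoke the retraction criterion of~\cite{ST} recalled in the introduction: for an $(m-1)$-connected Poincar\'{e} duality complex $M$, a retraction of $S^{m}$ off $M$ implies that the attaching map of the top cell is inert. This mirrors the proof of Lemma~\ref{lem:liftthroughloop}, with $\Omega S^{2m+1}$ replaced by the odd sphere $S^{m}$ itself.

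\emph{Step 1 (the inclusion).} Let $x\in H_{m}(M;\mathbb{Z})$ generate a $\mathbb{Z}$-summand, and let $y\in H^{m}(M;\mathbb{Z})$ be dual to $x$, so $\langle y,x\rangle=1$. Since $M$ is $(m-1)$-connected, the Hurewicz isomorphism $\pi_{m}(M)\cong H_{m}(M)$ gives a map $a\colon S^{m}\to M$ with $a_{\ast}(\iota)=x$, where $\iota$ generates $H_{m}(S^{m})$.

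\emph{Step 2 (the retraction).} Let $y$ be represented by $r\colon M\to K(\mathbb{Z},m)$, and let $s\colon S^{m}\to K(\mathbb{Z},m)$ represent a generator. Since $m$ is odd, $\pi_{\ast}(S^{m})$ has its only $\mathbb{Z}$-summand in degree $m$. Localised at $p$, its first nonzero torsion group is in degree $m+2p-3$. Hence, away from primes $p\leq\frac{n-m+3}{2}$, the homotopy fibre of $s$ is $(m+2p-4)$-connected with $m+2p-4\geq n-1$. Note that $m+2p-4\geq n-1$ is exactly the inequality $p\geq\frac{n-m+3}{2}$. So $s$ is an isomorphism on homotopy groups in degrees $<n$ and an epimorphism in degree $n$. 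Obstruction theory over the $n$-dimensional complex $M$ then gives a lift $r'\colon M\to S^{m}$ with $s\circ r'\simeq r$, hence $(r')^{\ast}$ sends the generator to $y$. The composite $r'\circ a\colon S^{m}\to S^{m}$ has degree $\langle y,x\rangle=1$, so it is a (local) homotopy equivalence, and $S^{m}$ retracts off $M$.

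\emph{Step 3 (inertness).} Apply the criterion of~\cite{ST} to conclude that $\Omega i\colon\Omega\overline{M}\to\Omega M$ has a right homotopy inverse after localisation.

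The main obstacle is Step 3. The result of~\cite{ST} is stated integrally, so one must check that its argument goes through verbatim in the category of spaces localised away from the given primes. That argument uses only cofibration and fibration manipulations, the Poincar\'{e} duality cup-product pairing, and a Ganea/Beben--Theriault type splitting of the loops of the homotopy fibre of $r'$, all of which localise. If this check became delicate, I would instead redo the argument directly: take the homotopy fibre of $r'\colon M\to S^{m}$, restrict it over $\overline{M}$, and use the section $a$ to split $\Omega M\simeq\Omega S^{m}\times\Omega(\text{fibre})$. I would then show that this decomposition is compatible with $\Omega i$, in the same way as the diagram chase in the proof of Theorem~\ref{fibinert}.
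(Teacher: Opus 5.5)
Your proposal is correct and takes essentially the same route as the paper. The paper does not reprove this statement: it cites \cite[Theorem 6.3]{T2}, which it describes as proved via explicit retractions. That is your argument: an obstruction-theoretic lift of $M\to K(\mathbb{Z},m)$ through the odd sphere $S^{m}$, valid once $m+2p-4\geq n-1$, gives a local retraction of $S^{m}$ off $M$, and the inertness criterion of \cite{ST} then finishes. The local use of \cite{ST} that you flag as the main obstacle is the same assumption the paper itself makes later, in Remark~\ref{rem:decompforM} and Theorem~\ref{thm:refineddecomp}.
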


The remainder of this section will prove Theorem \ref{thm:localinert} when $m$ is even. Let $f_M$ be the attaching map for the top cell of $M$. The strategy will be to iterate Theorem \ref{fibinert} and Proposition \ref{localfib} to construct a Poincar\'{e} duality complex $N$ of even connectivity, such that the inertness of the attaching map for the top cell $f_N$ of $N$ is equivalent to that of $f_M$. The fact that $N$ is of even connectivity implies that Proposition \ref{prop:evenconnectivitycase} can be applied to obtain inertness of  $f_N$, and hence $f_M$.

We first recall some notation from the introduction. Let $M$ be a $(2m-1)$-connected, $n$-dimensional Poincar\'{e} duality complex with $m \geq 1$. Let \[S = \{2k \: | \: H_{2k}(M;\mathbb{Z}) \neq 0, H_{2i-1}(M;\mathbb{Z}) = 0, 1 \leq i \leq k,\: 2m \leq 2k \leq 4m-2\}.\] For $k \geq 1$, let $r_k = \mathrm{rank}(H_k(M;\mathbb{Z}))$. Let $\mathcal{P}$ be the set of primes $p$ satisfying the inequality \[p \leq \frac{n + \sum_{k \in S, k \neq \max(S)} r_{k}(k-1) + (r_{\max(S)}-1)(\max(S)-1)+3}{2} -\frac{\max(S)}{2}.\]

\begin{lemma}
\label{lem:ReduceN}
    Let $M$ be a $(2m-1)$-connected, $n$-dimensional Poincar\'{e} duality complex, such that $r_{2m} \geq 1$. Suppose that for all $k \in S$, $H_k(M;\mathbb{Z})$ is torsion-free. Then 
    \begin{enumerate}
        \item[(a)] localised away from primes in $\mathcal{P}$, there exists a $(\max(S))$-connected Poincar\'{e} duality complex $N$ of dimension $n + \sum_{k \in S} r_{k}(k-1)$ such that the attaching map for the top cell of $N$ is inert if and only if the attaching map for the top cell of $M$ is inert; 
        \item[(b)] there is an isomorphism $\pi_{\max(S)+1}(M) \otimes \mathbb{Q} \cong \pi_{\max(S)+1}(N) \otimes \mathbb{Q}$; 
        \item[(c)] if we have that $\max(S)+1 \leq 4m-3$ then $H_{\max(S)+1}(N) \cong H_{\max(S)+1}(M)$.
    \end{enumerate}
\end{lemma}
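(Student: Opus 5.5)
We will prove the lemma by iterating Proposition~\ref{localfib} and Theorem~\ref{fibinert}, killing one $\mathbb{Z}$-summand of the lowest even homology group at a time and tracking the dimension, the connectivity and the set of primes that must be inverted. Write $S=\{2k_1<2k_2<\cdots<2k_s\}$, so $2k_1=2m$ because $r_{2m}\geq 1$. Each $H_{2k_j}(M;\mathbb{Z})$ is torsion-free and hence free of rank $r_{2k_j}$. Start with $M_0=M$, which is $(2m-1)$-connected. By Poincar\'e duality and the definition of $S$ there is no odd homology below degree $2k_j$, so at each stage the complex is $(2k_j-1)$-connected with a free lowest homology group.

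At a typical stage we have a $(2k-1)$-connected complex $M_i$ of dimension $n_i$ with a primitive $\mathbb{Z}$-summand in $H_{2k}$. We apply Proposition~\ref{localfib} to get, away from primes $p\leq \frac{n_i+3}{2}-k$, a fibration $S^{2k-1}\to M_{i+1}\to M_i$ with null homotopic fibre inclusion. Here $M_{i+1}$ is a $(2k-1)$-connected Poincar\'e duality complex of dimension $n_i+2k-1$, and its $H_{2k}$ has rank one less. Theorem~\ref{fibinert} then says that inertness for $M_{i+1}$ is equivalent to inertness for $M_i$, since $n_i>2k$. We run this $r_{2k}$ times at degree $2k$ and then move to the next element of $S$. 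Summing the dimension increments gives total dimension $n+\sum_{k\in S}r_k(k-1)$. The largest prime bound used is the one at the very last step, where $k=\max(S)/2$ and $n_i=n+\sum_{k\neq\max S}r_k(k-1)+(r_{\max S}-1)(\max S-1)$. This gives exactly $\mathcal{P}$, and all earlier bounds are smaller because $n_i$ grows faster than $k$ decreases the bound.

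The main obstacle is controlling the homology of $M_{i+1}$ in the range up to degree $4m-1$. We need the new $H_{2k}$ to be free of rank one less, the odd groups $H_{2i-1}$ below the next element of $S$ to stay zero, and the higher $H_{2k_j}$ to be unchanged and torsion-free. To do this we use the Serre spectral sequence of $S^{2k-1}\to M_{i+1}\to M_i$ as in the proof of Proposition~\ref{localfib}. The $E^2$ page has only two rows, $q=0$ and $q=2k-1$, and the only differential is $d^{2k}\colon H_p(M_i)\to H_{p-2k}(M_i)$, which is cap product with the class $y$ (the Wang/Gysin sequence). In degrees below $4k-1$ the row $q=2k-1$ contributes only $H_{2k-1}$ from the $(0,2k-1)$ spot. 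That class is killed by $d^{2k}$ on $x$, since $y(x)=1$, so $H_{2k}(M_{i+1})$ is the kernel of $y$, which is free of rank $r_{2k}-1$. The other groups $H_j$ with $j<4k-1$ are unchanged.

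Because $\max(S)\leq 4m-2\leq 4k-2$ and every $k$ we use satisfies $k\geq m$, all degrees up to $\max(S)+1$ lie in this range. This yields (c) when $\max(S)+1\leq 4m-3$: the range condition $\max(S)+1<4k-1$ holds at every stage. It also shows that the process reaches a $\max(S)$-connected complex $N$, because after exhausting $H_{\max S}$ the next homology group is either odd-degree or above the range. For (b) we use the long exact sequence in homotopy of each fibration. The fibre inclusion is null, so $\pi_*(M_{i+1})\otimes\mathbb{Q}\cong \pi_*(M_i)\otimes\mathbb{Q}$ away from degree $2k$ and $2k-1$, and in particular in degree $\max(S)+1$, which is odd and larger than $2k$ since $\max(S)\ge 2k$. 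Composing these isomorphisms gives (b).
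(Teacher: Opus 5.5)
Your proposal is correct and follows the paper's proof. You iterate Proposition~\ref{localfib} and Theorem~\ref{fibinert} one $\mathbb{Z}$-summand at a time, read off $\mathcal{P}$ from the final step, and obtain (b) from the long exact homotopy sequences of the fibrations $S^{2k-1}\to M_{i+1}\to M_i$.

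The one difference is that for (c), and for the intermediate homology bookkeeping the paper leaves implicit, you use the two-row Serre spectral sequence in degrees below $4k-1$ instead of the paper's Blakers--Massey comparison; both give the same range. One small correction: ``all degrees up to $\max(S)+1$ lie in this range'' should read ``up to $\max(S)$'', because $\max(S)+1=4k-1$ when $\max(S)=4m-2$ and $k=m$. This does not affect the argument, since you invoke the correct condition $\max(S)+1\leq 4m-3$ for (c).
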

\begin{proof}
Let $S = \{k_1,\cdots,k_\ell\}$, with $k_i < k_{i+1}$ for $1 \leq i \leq \ell-1$. Note that since $r_{2m} \geq 1$, $k_1 = 2m$. Localised away from $\mathcal{P}$, we define a sequence of Poincar\'{e} duality complexes \[M = N_0,N_1,\cdots,N_{r_{k_1}},N_{r_{k_1}+1},\cdots,N_{r_{k_1}+r_{k_2}},\cdots,N_{\sum_{i=1}^\ell r_{k_i}},\] such that the attaching map for the top cell of $N_i$ is inert if and only if the attaching map for the top cell of $N_{i+1}$ is inert, and $N_{\sum_{i=1}^\ell r_{k_i}}$ is the Poincar\'{e} duality complex $N$ in the statement of the lemma. 

First, we define $N_1$. Since $r_{2m} \geq 1$, there exists $x \in H_{2m}(M)$ which generates a primitive $\mathbb{Z}$-summand. Proposition \ref{localfib} implies that localised away from primes $p \leq \frac{n+3}{2}-m$ there is a homotopy fibration \[S^{2m-1} \xrightarrow{\alpha_1} N_1 \rightarrow M,\] where $N_1$ is a Poincar\'{e} duality complex of dimension $n+2m-1$, $\mathrm{rank}(H_{2m}(N_1)) = r_{2m}-1$ and $\alpha_1$ is null homotopic. By Theorem \ref{fibinert}, $N_1$ is inert if and only if $M$ is inert. There are three cases.

If $\ell = 1$, and $r_{2m} = 1$, take $N=N_{1}$ and we are done. If $r_{2m}-1 \geq 2$, apply Proposition \ref{localfib} again to obtain a homotopy fibration \[S^{2m-1} \xrightarrow{\alpha_2} N_2 \rightarrow N_1\] localised away from primes $p \leq \frac{n+(2m-1)+3}{2}-m$, where $N_2$ is a Poincar\'{e} duality complex of dimension $n+2(2m-1)$, where $\mathrm{rank}(H_{2m}(N_2)) = r_{2m}-2$ and $\alpha_2$ is null homotopic. This argument can be iterated for each generator of a $\mathbb{Z}$-summand in $H_{2m}(M)$ so that, localised away from primes $p \leq \frac{n+(r_{2m}-1)(2m-1)+3}{2}-m$, we obtain a Poincar\'{e} duality complex $N_{r_{k_1}}$ of dimension $n+r_{k_1}(2m-1)$ with $H_{2m}(N_{r_{k_1}})=0$. In particular, $N_{r_{k_1}}$ is $k_1$-connected. If $\ell=1$, take $N=N_{r_{k_1}}$ and we are done. 

If $\ell \geq 2$ then this argument is repeated for each $\mathbb{Z}$-summand in $H_{k_2}(N_{r_{k_1}}) \cong H_{k_2}(M)$ so that, localised away from primes $p \leq \frac{n+r_{2m}(2m-1)+(r_{k_2}(k_2-1))+3}{2}-\frac{k_1}{2}$, we obtain a Poincar\'{e} duality complex $N_{r_{k_1}+r_{k_2}}$ of dimension $n+r_{k_1}(k_1-1) + r_{k_2}(k_2-1)$ which is $k_2$-connected. Repeating this argument for each $k_i$ shows that, localised away from primes in $\mathcal{P}$, $N_{\sum_{i=1}^\ell r_{k_i}}$ is the Poincar\'{e} duality complex $N$ in the statement of the lemma, proving part (a). 

For $1 \leq j \leq \sum_{i=1}^{\ell} r_{k_i}$, there is a homotopy fibration $S^{\mathrm{Conn}(N_{j-1})} \xrightarrow{\alpha_j} N_j \to N_{j-1}$, where $\mathrm{Conn}(N_{j-1})$ denotes the connectivity of $N_{j-1}$, and $\alpha_j$ is null homotopic. By construction, we have $2m-1 \leq \mathrm{Conn}(N_{j-1}) \leq \max(S)-1$. As $\mathrm{Conn}(N_{j-1})$ is odd, $\pi_*(S^{\mathrm{Conn}(N_{j-1})}) \otimes \mathbb{Q}$ is trivial in all degrees except $\mathrm{Conn}(N_{j-1})$. Therefore, the long exact sequence of homotopy groups implies that $\pi_{\max(S)+1}(N_j)\otimes\mathbb{Q} \cong \pi_{\max(S)+1}(N_{j-1})\otimes\mathbb{Q}$. Since this holds for every $j$, we obtain an isomorphism $\pi_{\max(S)+1}(M)\otimes\mathbb{Q} \cong \pi_{\max(S)+1}(N)\otimes\mathbb{Q}$. This proves (b).

As for (c), the Blakers-Massey theorem implies that the homotopy fibration defining $N_j$ is a homotopy cofibration in dimensions $\leq 2\mathrm{Conn}(N_{j-1})$. Since $2m-1 \leq \mathrm{Conn}(N_{j-1})$, we obtain that the homotopy fibration is a homotopy cofibration in dimensions $\leq 4m-2$. If $\max(S)+1 \leq 4m-3$, then as $\mathrm{Conn}(N_{j-1})\leq \max(S)-1$, the long exact sequence of homology groups implies that $H_{\max(S)+1}(N_j) \cong H_{\max(S)+1}(N_{j-1})$. Since this holds for all $j$, $H_{\max(S)+1}(N) \cong H_{\max(S)+1}(M)$.
\end{proof}

Using Lemma \ref{lem:ReduceN}, inertness for the attaching map for the top cell of $M$ can be reduced to the corresponding problem for the more highly connected $N$. To show that the attaching map for the top cell of $N$ is inert, we wish to apply Proposition \ref{prop:evenconnectivitycase}. However, if $N$ is $(k-1)$-connected, this requires that $H_k(N)$ contains a $\mathbb{Z}$-summand. The next result classifies when this occurs. 

\begin{lemma}
\label{lem:NZsummand}
    Let $M$ be a $(2m-1)$-connected, $n$-dimensional Poincar\'{e} duality complex, such that $r_{2m} \geq 1$. Suppose that for all $k \in S$, $H_k(M;\mathbb{Z})$ is torsion-free. Let $N$ be as in Lemma \ref{lem:ReduceN}. There are two cases. 
\begin{itemize} 
   \item[(a)] If $\max(S)+1 \leq 4m-3$, then $H_{\max(S)+1}(N)$ contains a $\mathbb{Z}$-summand if and only if $H_{\max(S)+1}(M)$ contains a $\mathbb{Z}$-summand. 
    \item[(b)] If $\max(S)+1 = 4m-1$, then $H_{4m-1}(N)$ contains a $\mathbb{Z}$-summand if and only if one of the following conditions hold: \begin{enumerate}
        \item $H_{4m-1}(M)$ contains a $\mathbb{Z}$-summand,
        \item $H_{4m-1}(M)$ does not contain a $\mathbb{Z}$-summand and the rational cup product map 
        \[\mathrm{Sym}(H^{2m}(M;\mathbb{Q}) \otimes H^{2m}(M;\mathbb{Q})) \xrightarrow{\cup} H^{4m}(M;\mathbb{Q})\] is not an injection. 
    \end{enumerate} 
\end{itemize} 
\end{lemma}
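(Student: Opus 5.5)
Part (a) follows directly from Lemma~\ref{lem:ReduceN}(c). If $\max(S)+1\leq 4m-3$, that lemma gives $H_{\max(S)+1}(N)\cong H_{\max(S)+1}(M)$. In particular one contains a $\mathbb{Z}$-summand if and only if the other does. The work is in part (b), where $\max(S)+1=4m-1$. Here $N$ is $(4m-2)$-connected, hence $(4m-2)$-connected and simply connected. By Hurewicz, $\pi_{4m-1}(N)\cong H_{4m-1}(N)$, so $H_{4m-1}(N)$ contains a $\mathbb{Z}$-summand exactly when $\pi_{4m-1}(N)\otimes\mathbb{Q}\neq 0$. (Localisation away from $\mathcal{P}$ does not affect whether a free summand exists.) By Lemma~\ref{lem:ReduceN}(b) this rational group equals $\pi_{4m-1}(M)\otimes\mathbb{Q}$. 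So the problem reduces to a rational homotopy statement about $M$: show $\pi_{4m-1}(M)\otimes\mathbb{Q}\neq 0$ if and only if (1) or (2) holds.

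Since $\max(S)=4m-2$ and $S$ records even degrees, $H_{2i-1}(M)=0$ for all $2i-1\leq 4m-3$. I would compute $\pi_{4m-1}(M)\otimes\mathbb{Q}$ from the minimal Sullivan model of $M$ through degree $4m$. Generators of the model in degrees $<4m-1$ come from rational cohomology; the generators in degree $4m-1$ are dual to $\pi_{4m-1}(M)\otimes\mathbb{Q}$. The model through degree $4m-1$ has the following shape:
\begin{itemize}
\item generators $V^{k}\cong H^{k}(M;\mathbb{Q})$ for $2m\leq k\leq 4m-2$ with zero differential, where only even $k$ contribute since odd cohomology vanishes below $4m-1$ (torsion-freeness in $S$ gives the universal coefficient identifications);
\item generators in degree $4m-1$, namely $H^{4m-1}(M;\mathbb{Q})$, which have zero differential, together with generators $W$ whose differential lands in the degree-$4m$ part of $\Lambda V$.
\end{itemize}
Products of positive-degree generators first occur in degree $4m$, and there they are exactly $\mathrm{Sym}(V^{2m}\otimes V^{2m})=\mathrm{Sym}(H^{2m}\otimes H^{2m})$. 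The generators $W$ must kill the kernel of $\mathrm{Sym}(H^{2m}\otimes H^{2m})\to H^{4m}(M;\mathbb{Q})$, the cup product. Hence
\[
\dim \pi_{4m-1}(M)\otimes\mathbb{Q}=\dim H^{4m-1}(M;\mathbb{Q})+\dim\ker(\cup).
\]
This is nonzero if and only if $H_{4m-1}(M)$ has a $\mathbb{Z}$-summand, or the cup product map fails to be injective. That is precisely condition (1) or (2). (In (2), "does not contain a $\mathbb{Z}$-summand" is only the complementary case.)

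An alternative to the Sullivan model is to work with a rational cell structure on $M$. Rationally, the $(4m-1)$-skeleton has the form $\bigvee S^{k}\cup(\text{cells})$. One would then compute $\pi_{4m-1}$ via the Hilton--Milnor theorem: the Whitehead products $[\iota_a,\iota_b]$ on $2m$-spheres span $\mathrm{Sym}$-many classes, and the $4m$-cells kill exactly those detected by nonzero cup products. I expect the main obstacle to be making this identification rigorous. One must verify that the degree-$4m$ part of the model is exactly $\mathrm{Sym}(V^{2m}\otimes V^{2m})$, with no contributions from mixed or odd generators. One must also check that the degree-$(4m-1)$ generators with nonzero differential correspond exactly to the kernel of the cup product, with no further correction from Massey-type terms in this low range. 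Both follow from the connectivity bound $2m-1$ and the vanishing of odd rational cohomology below $4m-1$, so I would carry out this degree count carefully.
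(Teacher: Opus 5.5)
Your proposal is correct and follows the paper's argument. Part (a) comes from Lemma~\ref{lem:ReduceN}(c), and part (b) uses Hurewicz on $N$ together with Lemma~\ref{lem:ReduceN}(b) to reduce to whether $\pi_{4m-1}(M)\otimes\mathbb{Q}\neq 0$. The only difference is in that last step: where the paper cites rational Hurewicz surjectivity and the exact sequence~(\ref{eqn:pi_3exact}), you derive both at once from the minimal Sullivan model, obtaining $\dim\pi_{4m-1}(M)\otimes\mathbb{Q}=\dim H^{4m-1}(M;\mathbb{Q})+\dim\ker(\cup)$. That formula is valid, since $V^{<2m}=0$ forces $(\Lambda V)^{4m-1}=V^{4m-1}$ and $(\Lambda^{\geq 2}V)^{4m}=\mathrm{Sym}^2 V^{2m}$.
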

\begin{proof}
    First, suppose $\max(S)+1 \leq 4m-3$. Then Lemma \ref{lem:ReduceN}~(c) implies that there is an isomorphism $H_{\max(S)+1}(N) \cong H_{\max(S)+1}(M)$, so part~(a) holds.

    Now suppose $\max(S)+1 = 4m-1$. The Hurewicz isomorphism implies $H_{4m-1}(N) \cong \pi_{4m-1}(N)$ and Lemma \ref{lem:ReduceN}~(b) implies that $\pi_{4m-1}(N) \otimes \mathbb{Q} \cong \pi_{4m-1}(M) \otimes \mathbb{Q}$. Hence, it suffices to show that $\pi_{4m-1}(M) \otimes \mathbb{Q}$ is non-trivial.

    If $H_{4m-1}(M)$ contains a $\mathbb{Z}$-summand, it is well known that the rational Hurewicz homomorphism is a surjection in this dimension, in which case $\pi_{4m-1}(M) \otimes \mathbb{Q}$ is non-trivial and we are done. If $H_{4m-1}(M)$ does not contain a $\mathbb{Z}$-summand, then by \cite{L}, there is an exact sequence \begin{equation}\label{eqn:pi_3exact}0 \rightarrow (\pi_{4m-1}(M) \otimes \mathbb{Q})^* \rightarrow \mathrm{Sym}(H^{2m}(M;\mathbb{Q}) \otimes H^{2m}(M;\mathbb{Q})) \xrightarrow{\cup} H^{4m}(M;\mathbb{Q}),\end{equation}  where $(\pi_{4m-1}(M)\otimes\mathbb{Q})^{\ast}$ is the hom-dual of $\pi_{4m-1}(M)\otimes\mathbb{Q}$. In particular, $(\pi_{4m-1}(M) \otimes \mathbb{Q})^*$ is isomorphic to the kernel of the cup product map. Hence, $(\pi_{4m-1}(M) \otimes \mathbb{Q})^*$ is trivial if and only if the cup product map is an injection. Dualising, $\pi_{4m-1}(M) \otimes \mathbb{Q}$ is non-trivial and we are done.
\end{proof}

With Lemmas \ref{lem:ReduceN} and \ref{lem:NZsummand}, we can prove Theorem \ref{thm:localinert}.

\begin{proof}[Proof of Theorem \ref{thm:localinert}]
The case where $m$ is odd is Proposition \ref{prop:evenconnectivitycase}, so suppose that $M$ is $(2m-1)$-connected and satisfies the hypotheses in the statement of Theorem \ref{thm:localinert}.

Localise away from primes in $\mathcal{P}$. By Lemma \ref{lem:ReduceN}~(a), there exists a $(\max(S))$-connected Poincar\'{e} duality complex $N$ of dimension $n + \sum_{k \in S} r_{k}(k-1)$ such that the attaching map for the top cell of $N$ is inert if and only if the attaching map for the top cell of $M$ is inert. Since $\mathrm{max}(S)$ is even, Proposition \ref{prop:evenconnectivitycase} implies that the attaching map for the top cell of $N$ is inert if $H_{\max(S)+1}(N)$ contains a $\mathbb{Z}$-summand. However, this follows from Lemma \ref{lem:NZsummand}. 
\end{proof}

More can be said in the special case of simply-connected $6$-dimensional Poincar\'{e} duality complexes with $\mbox{rank}(H_{2}(M;\mathbb{Z}))\geq 1$. 

\begin{corollary}
\label{cor:6dim}
    Let $M$ be a simply-connected, $6$-dimensional Poincar\'{e} duality complex and suppose that $H_2(M;\mathbb{Z}) \cong \oplus_{i=1}^r \mathbb{Z}$ for some integer $r\geq 1$. Then the attaching map of the $6$-cell of $M$ is inert when localised away from $p \leq 3+\frac{r}{2}$ if and only if $M \not \simeq_\mathbb{Q} \mathbb{C}P^3$. 
\end{corollary}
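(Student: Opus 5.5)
The plan is to specialise Theorem~\ref{thm:localinert}~(2) to this case and check that its hypotheses hold exactly when $M\not\simeq_{\mathbb{Q}}\mathbb{C}P^3$. For the converse direction I will use a separate rational homotopy argument.

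\textbf{Applying Theorem~\ref{thm:localinert}.} Since $M$ is simply-connected, it is $(m-1)$-connected with $m=2$, so in the notation of Section~\ref{sec:localinert} it is $(2m-1)$-connected with $m=1$. Then $S\subseteq\{2k\mid 2\le 2k\le 2\}$. Since $H_2(M;\mathbb{Z})\cong\mathbb{Z}^r\neq 0$, we get $S=\{2\}$, $\max(S)=2$ and $r_2=r$.

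Hypotheses (a) and (b) of Theorem~\ref{thm:localinert}~(2) hold because $H_2$ is free of rank $r\ge 1$. Hypothesis (c) is vacuous, because $\max(S)+1=3=4m-1$.

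The prime bound evaluates to
\[
\frac{6+(r-1)\cdot 1+3}{2}-1 \;=\; 3+\frac{r}{2}.
\]
Hence the attaching map is inert away from $p\le 3+\frac r2$ provided condition (d) holds. Condition (d) says that either $H_3(M;\mathbb{Z})$ has a $\mathbb{Z}$-summand, or the cup product map $\mathrm{Sym}(H^2\otimes H^2)\to H^4$ with rational coefficients is not injective.

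\textbf{If $M\not\simeq_{\mathbb{Q}}\mathbb{C}P^3$, then (d) holds.} I argue by contrapositive: suppose (d) fails. Then $b_3=0$ and the cup product map is injective.

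By Poincar\'e duality, $\dim H^4(M;\mathbb{Q})=r$. Also $\dim\mathrm{Sym}(H^2\otimes H^2)=r(r+1)/2$. Injectivity therefore forces $r(r+1)/2\le r$, so $r=1$.

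With $r=1$, let $x$ generate $H^2(M;\mathbb{Q})$. Injectivity gives $x^2\neq 0$, so $x^2$ spans $H^4$. Poincar\'e duality then gives $x^3\neq 0$. Together with $b_1=b_3=b_5=0$, this shows $H^*(M;\mathbb{Q})\cong\mathbb{Q}[x]/(x^4)$.

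A truncated polynomial algebra on one even generator is intrinsically formal: its minimal model is $(\Lambda(x,y),\,dy=x^4)$ and is determined by the cohomology. Hence $M\simeq_{\mathbb{Q}}\mathbb{C}P^3$.

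\textbf{If $M\simeq_{\mathbb{Q}}\mathbb{C}P^3$, the map is not locally inert.} Suppose $\Omega i$ had a right homotopy inverse after localising away from finitely many primes. Rationalising further, $\Omega i$ would have a right homotopy inverse rationally, so $\pi_*(\Omega i)\otimes\mathbb{Q}$ would be surjective.

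Here $M_{\mathbb{Q}}\simeq\mathbb{C}P^3_{\mathbb{Q}}$, so $\pi_*(\Omega M)\otimes\mathbb{Q}$ is nonzero only in degrees $1$ and $6$. Rationally the $5$-skeleton is $\overline{M}\simeq_{\mathbb{Q}}\mathbb{C}P^2$. Then $\pi_*(\Omega\overline{M})\otimes\mathbb{Q}$ is concentrated in degrees $1$ and $4$, coming from $S^1\to S^5\to\mathbb{C}P^2$. So degree $6$ is not hit, a contradiction. Alternatively, this direction follows from the converse of Halperin--Thomas for monogenic rational cohomology.

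\textbf{Main obstacle.} The only delicate point is the rational identification of $\overline{M}$ with $\mathbb{C}P^2$. The $5$-skeleton need not be exactly $\mathbb{C}P^2$, but rationally it has the cells and cup structure of $\mathbb{C}P^2$ plus possible cancelling odd cells. If needed, I will instead compare rational Betti numbers of $\Omega\overline{M}$ and $\Omega M$ via their Sullivan models.
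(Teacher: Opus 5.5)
Your proposal is correct and follows essentially the same route as the paper. You specialise Theorem~\ref{thm:localinert}~(2) with $S=\{2\}$ and observe that condition (d) fails only when $r=1$ and $H^*(M;\mathbb{Q})\cong\mathbb{Q}[x]/(x^4)$; in the $\mathbb{C}P^3$ case you detect non-inertness from the rational class in $\pi_7(M)$ (degree $6$ in $\Omega M$), which $\overline{M}\simeq_{\mathbb{Q}}\mathbb{C}P^2$ cannot hit.

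Two points are in fact more careful than the paper. Your count $r(r+1)/2$ is the right one, where the paper writes $\binom{r}{2}$, and you explicitly account for $b_3$ when $r=1$. The obstacle you flag is harmless: $i^*$ is a ring isomorphism in degrees $\le 5$, so $H^*(\overline{M};\mathbb{Q})\cong\mathbb{Q}[x]/(x^3)$, which is intrinsically formal.
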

\begin{proof}
The Universal Coefficient Theorem and Poincar\'{e} duality implies \[H_2(M;\mathbb{Q}) \cong H^2(M;\mathbb{Q}) \cong H^4(M;\mathbb{Q}).\] Therefore, if $r \geq 2$, the cup product map is not injective for dimensional reasons: there are $\binom{r}{2}$ cup products of degree $2$ generators but only $r$ degree $4$ generators. Hence, Theorem \ref{thm:localinert} implies that the attaching map of the $6$-cell of $M$ is inert localised away from $p \leq 3+\frac{r}{2}$.

If $r=1$, let $x\in H^{2}(M;\mathbb{Q})$, $y\in H^{4}(M;\mathbb{Q})$ and $z\in H^{6}(M;\mathbb{Q})$ be the generators. Then $x^{2}=ty$ for some $t\in\mathbb{Q}$ and $xy=z$. If $t=0$ then $M \simeq_{\mathbb{Q}} S^2 \times S^4$ and if $t\neq 0$ then $M \simeq_{\mathbb{Q}} \mathbb{C}P^3$. If $M \simeq_{\mathbb{Q}} S^2 \times S^4$, the attaching map for the $6$-cell is inert localised away from $p \leq 3+\frac{r}{2}$ since the rational cup product map is not injective. If $M\simeq_{\mathbb{Q}}\mathbb{C}P^{3}$ then $\pi_{7}(M)$ is rationally nontrivial while $\overline{M}\simeq_{\mathbb{Q}}\mathbb{C}P^{2}$ implies that $\pi_{7}(\overline{M})$ is rationally trivial. Thus the inclusion $\overline{M}\rightarrow M$ does not have a right homotopy inverse, implying that the atttaching map for the $6$-cell of $M$ is not inert.
\end{proof}

\section{A loop space decomposition of certain simply-connected $6$-dimensional Poincar\'{e} duality complexes}
\label{sec:6dim}

In this section, we prove a loop space decomposition for simply-connected $6$-dimensional Poincar\'{e} duality complexes whose second cohomology is torsion-free and of rank $2$, and the third cohomology group is trivial. We begin with an initial decomposition. 

\begin{lemma}
\label{lem:initialdecomp6dim}
    Let $M$ be a simply-connected $6$-dimensional Poincar\'{e} duality complex. Assume that $H^2(M;\mathbb{Z}) \cong \mathbb{Z} \oplus \mathbb{Z}$. Then there exists a homotopy fibration $S^1 \times S^1 \xrightarrow{\alpha} N \to M$ where $N$ is a $2$-connected, $8$-dimensional Poincar\'{e} duality complex and $\alpha$ is null homotopic. Moreover, $H_3(N)$ contains a $\mathbb{Z}$-summand and there is a homotopy equivalence \[\Omega M \simeq S^1 \times S^1 \times \Omega N.\]
\end{lemma}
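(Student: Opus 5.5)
The plan is to carry out the construction of the Remark following Proposition~\ref{localfib} twice, but to do both steps at once and integrally. Choose generators $y_1,y_2$ of $H^2(M;\mathbb{Z})\cong\mathbb{Z}\oplus\mathbb{Z}$ and let
\[r=(r_1,r_2)\colon\namedright{M}{}{BS^1\times BS^1}\]
represent them. Define $N$ as the homotopy fibre of $r$. This gives a homotopy fibration sequence
\[\Omega M\xrightarrow{\partial}S^1\times S^1\xrightarrow{\alpha}N\longrightarrow M\xrightarrow{r}BS^1\times BS^1.\]

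The first step is to show that $\alpha$ is null homotopic. Since $M$ is simply-connected, Hurewicz gives $\pi_2(M)\cong H_2(M)\cong\mathbb{Z}\oplus\mathbb{Z}$. The map $r$ induces an isomorphism on $\pi_2$, because $y_1,y_2$ form a dual basis. Choose maps $S^1\to\Omega M$ adjoint to generators $x_1,x_2$ of $\pi_2(M)$ and multiply them with the loop multiplication. This gives a map
\[s\colon\namedright{S^1\times S^1}{}{\Omega M}\]
whose composite with $\partial=\Omega r$ is a homotopy equivalence, since $\Omega r$ is a loop map and an isomorphism on $\pi_1$ into the $H$-group $S^1\times S^1$. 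Hence $\partial$ has a right homotopy inverse and $\alpha$ is null homotopic. The standard argument already used for $e$ in Theorem~\ref{fibinert} then shows that $\mu\circ(s\times\Omega g)\colon S^1\times S^1\times\Omega N\to\Omega M$ is a homotopy equivalence, where $g\colon N\to M$ is the fibre inclusion. This gives $\Omega M\simeq S^1\times S^1\times\Omega N$.

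Next I determine the properties of $N$. From the long exact sequence in homotopy, $\pi_1(N)=0$ and $\pi_2(N)\cong\ker(\pi_2(M)\to\pi_2(BT^2))=0$, so $N$ is $2$-connected. Since $N$ is the total space of a principal $T^2$-bundle over $M$, equivalently two iterated circle fibrations $S^1\to N_1\to M$ and $S^1\to N\to N_1$, the result of~\cite{Q} applied twice shows that $N$ is a Poincar\'{e} duality complex of dimension $6+1+1=8$. For finiteness, a Serre spectral sequence argument bounds the homology of $N$ by dimension $8$, so it has the homotopy type of a finite complex, as in the proof of Proposition~\ref{localfib}.

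The main obstacle is to show that $H_3(N)$ contains a $\mathbb{Z}$-summand. Since $N$ is $2$-connected, $H_3(N)\cong\pi_3(N)$, and $\pi_3(N)\cong\pi_3(M)$ because $\pi_*(T^2)$ vanishes above degree one. It therefore suffices to show that $\pi_3(M)\otimes\mathbb{Q}\neq 0$. If $H_3(M)$ has a $\mathbb{Z}$-summand, this holds by rational Hurewicz surjectivity. Otherwise I would use the exact sequence~(\ref{eqn:pi_3exact}) from~\cite{L} with $m=1$. This identifies $(\pi_3(M)\otimes\mathbb{Q})^*$ with the kernel of
\[\mathrm{Sym}(H^2(M;\mathbb{Q})\otimes H^2(M;\mathbb{Q}))\longrightarrow H^4(M;\mathbb{Q}).\]
The source has rank $3$, while Poincar\'{e} duality gives $H^4(M;\mathbb{Q})\cong H_2(M;\mathbb{Q})$ of rank $2$. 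So the kernel is nonzero, exactly as in Corollary~\ref{cor:6dim}. Hence $\pi_3(N)\otimes\mathbb{Q}\neq 0$, and since $H_3(N)$ is finitely generated it contains a $\mathbb{Z}$-summand.
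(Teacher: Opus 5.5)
Your proposal is correct and follows essentially the same route as the paper's proof: define $N$ as the homotopy fibre of $M\to\mathbb{C}P^\infty\times\mathbb{C}P^\infty$, build a right homotopy inverse for the connecting map by multiplying two circles in $\Omega M$, use \cite{Q} for Poincar\'{e} duality, and get the $\mathbb{Z}$-summand in $H_3(N)$ from $\pi_3(N)\otimes\mathbb{Q}\cong\pi_3(M)\otimes\mathbb{Q}$, Hurewicz surjectivity, and the rank count in the exact sequence of \cite{L} (applying \cite{Q} twice and using the integral isomorphism on $\pi_3$ are only cosmetic differences). One caveat, which the paper's proof shares, is that you pass from $H^2(M;\mathbb{Z})\cong\mathbb{Z}\oplus\mathbb{Z}$ to $\pi_2(M)\cong H_2(M;\mathbb{Z})\cong\mathbb{Z}\oplus\mathbb{Z}$; this tacitly assumes $H_2(M;\mathbb{Z})$ is torsion-free, as the introduction's hypothesis does, since any torsion in $H_2(M;\mathbb{Z})$ would survive to $\pi_2(N)$ and $N$ would then not be $2$-connected.
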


\begin{proof}
    Let $y_1,y_2 \in H^2(M;\mathbb{Z})$ be the generators of the $\mathbb{Z}$ summands. For $i \in \{1,2\}$, $y_i$ is represented by a map $r_i:M \to K(\mathbb{Z},2)$. Write $K(\mathbb{Z},2)$ as $\mathbb{C}P^{\infty}$ and note that 
$\Omega\mathbb{C}P^{\infty}\simeq S^{1}$. Let 
\(s\colon\namedright{M}{}{\mathbb{C}P^\infty \times \mathbb{C}P^\infty}\) 
be the product of the maps $r_{i}$. Define the space $N$ by the homotopy fibration sequence 
\begin{equation} 
  \label{Nfib} 
  \namedddright{S^1 \times S^1}{\alpha}{N}{}{M}{s}{\mathbb{C}P^\infty \times \mathbb{C}P^\infty}. 
\end{equation}  

Since $N$ fits in a homotopy fibration where the base and fibre are Poincar\'{e} duality complexes, 
by~\cite{Q} the space $N$ is also a Poincar\'{e} duality complex, and it has dimension $8$. By 
the Hurewicz isomorphism, $\pi_{2}(M)\cong H_{2}(M;\mathbb{Z})$, 
so as $s$ represents both degree $2$ generators, it induces an isomorphism 
on $\pi_{2}$. Therefore $\pi_{2}(N)\cong 0$, so $N$ is at least $2$-connected. 

The long exact sequence of homotopy groups implies that $\pi_3(M) \otimes \mathbb{Q} \cong \pi_3(N) \otimes \mathbb{Q}$. Hence, to show that $H_3(N)$ contains a $\mathbb{Z}$-summand, it suffices to show that $\pi_3(M)$ contains a $\mathbb{Z}$-summand. Arguing as in the proof of Lemma \ref{lem:NZsummand}, if $H_3(M)$ contains a $\mathbb{Z}$-summand, then surjectivity of the Hurewicz homomorphism implies $\pi_3(M)$ contains a $\mathbb{Z}$-summand. 

Now suppose $H_3(M)$ does not contain a $\mathbb{Z}$ summand. By Poincar\'{e} duality, $H^2(M;\mathbb{Q}) \cong H^4(M;\mathbb{Q})$. Since the rank of $H_2(M;\mathbb{Z})$ is 2, the rational cup product map is not injective for dimensional reasons: there are $3$ choices of cup products (up to ordering) of degree $2$ generators while there are only $2$ degree $4$ generators. Hence, \cite{L} implies that $\pi_3(M)$ contains a $\mathbb{Z}$-summand. 

To show the loop space decomposition, it suffices to show that $\alpha$ is null homotopic. Consider the map $\Omega M\xrightarrow{\Omega s} S^{1}\times S^{1}$ obtained by extending \eqref{Nfib} to the left. Since $s$ induces an isomorphism on $\pi_2$, $\Omega s$ induces an isomorphism on $\pi_{1}$. Therefore there are maps $a,b\colon S^{1}\rightarrow\Omega M$ such that $\Omega s\circ a$ and $\Omega s\circ b$ are the inclusions of the first and second wedge summands respectively. As $\Omega M$ is an $H$-space, $a$ and $b$ can be multiplied together, and as $\Omega s$ is an $H$-map, the composite $\Omega s\circ (a\cdot b)$ is a homotopy equivalence. Hence, $\Omega s$ has a right homotopy inverse, implying that $\alpha$ is null homotopic.
\end{proof}

Lemma \ref{lem:initialdecomp6dim} implies that to give a loop space decomposition for $M$, it suffices to give a decomposition for $N$. Notice that Lemma~\ref{lem:initialdecomp6dim} is easily generalised to $H^{2}(M;\mathbb{Z})\cong\oplus_{i=1}^{r}\mathbb{Z}$ for any $r\geq 2$, but with $N$ replaced by a $2$-connected Poincar\'{e} Duality complex $N_{r}$ of dimension $6+r$. The $r=2$ case is special because the homotopy type of $\Omega N$ can be identified through work of~\cite{ST}.

Let $\overline{N}$ be the $7$-skeleton of $N$. Since $N$ is a $2$-connected, $8$-dimensional Poincar\'{e} duality complex, there is a homotopy cofibration \[S^7 \xrightarrow{f_N} \overline{N} \xrightarrow{i} N,\] where $f_N$ is the attaching map of the $8$-cell. 
By Poincar\'{e} duality, 
\begin{equation} 
  \label{8dimcohomology} 
  H_m(N;\mathbb{Z})\cong\left\{\begin{array}{ll} 
    \mathbb{Z} & \mbox{if $m\in\{0,8\}$} \\ 
    \mathbb{Z}^{d}\oplus T & \mbox{if $m=3$} \\ 
    \mathbb{Z}^{d'}\oplus T & \mbox{if $m=4$} \\ 
    \mathbb{Z}^{d} & \mbox{if $m=5$} \\ 
    0 & \mbox{otherwise} 
  \end{array}\right. 
\end{equation}
where $d \geq 1$, $d' \geq 0$ and $T$ is a finite abelian group. For dimensional reasons, all the attaching maps of cells in $\overline{N}$ are in the stable range. Hence, $\overline{N} \simeq \Sigma N'$, for some $CW$-complex $N'$. In particular, $\overline{N}$ is a co-$H$ space. 

Assume that there is a map $h:N \rightarrow S^3$ with a right homotopy inverse $s:S^3 \rightarrow N$. For dimensional reasons, the retraction of $S^3$ off $N$ implies $S^3$ retracts off $\overline{N}$. As $\overline{N}$ is a co-$H$-space, this implies 
that there is a homotopy equivalence $\overline{N}\simeq S^{3}\vee A$ for some space $A$. Poincar\'{e} duality implies that $\overline{N}$ 
has dimension $5$, so the same is true for $A$. It also implies that in 
$\overline{N}\simeq S^{3}\vee A$ there is a class $y\in H^{5}(A)$ whose cup product 
with the cohomology class corresponding to $S^{3}$ equals the generator of $H^{8}(N)$. In \cite[Theorem 1.1]{ST}, it is shown that there is a space $B$ satisfying a homotopy cofibration 
\(\nameddright{B}{}{A}{}{S^{5}}\), 
where $S^{5}$ corresponds to $y$, and the following homotopy decomposition.

\begin{theorem}
\label{thm:decompforM}
    Let $M$ be a simply-connected $6$-dimensional Poincar\'{e} duality complex. Assume that $H^2(M;\mathbb{Z}) \cong \mathbb{Z} \oplus \mathbb{Z}$. Let $N$ be as in Lemma \ref{lem:initialdecomp6dim}. Suppose there is a map $N \to S^3$ which has a right homotopy inverse. Then there is a homotopy equivalence \[\Omega M \simeq S^1 \times S^1 \times \Omega S^3 \times \Omega (A \vee (B \wedge \Omega S^3)).\]
\end{theorem}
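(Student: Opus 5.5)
By Lemma~\ref{lem:initialdecomp6dim} there is a homotopy equivalence $\Omega M\simeq S^{1}\times S^{1}\times\Omega N$, where $N$ is a $2$-connected, $8$-dimensional Poincar\'{e} duality complex. It therefore suffices to prove
\[\Omega N\simeq\Omega S^{3}\times\Omega(A\vee(B\wedge\Omega S^{3})).\]
This is precisely the setting of \cite[Theorem 1.1]{ST}, applied to $N$ with $m=3$. The plan is to check its hypotheses and then read off the decomposition.

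The input is the assumed retraction $h\colon N\to S^{3}$ with right homotopy inverse $s$. As explained before the statement, for dimensional reasons $s$ factors through the $7$-skeleton $\overline{N}$, and $\overline{N}\simeq\Sigma N'$ is a co-$H$-space, so $\overline{N}\simeq S^{3}\vee A$. By Poincar\'{e} duality there is a class $y\in H^{5}(A)$ whose cup product with the class dual to $S^{3}$ is the fundamental class of $N$. This gives the homotopy cofibration $B\to A\to S^{5}$ with $S^{5}$ corresponding to $y$.

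The general machinery in \cite{ST} (going back to \cite{T1}) then runs in two steps.
\begin{enumerate}
\item The retraction of $S^{3}$ off $N$ makes the attaching map $f_{N}$ inert, so $\Omega i\colon\Omega\overline{N}\to\Omega N$ has a right homotopy inverse.
\item Let $F$ be the homotopy fibre of $h\colon N\to S^{3}$. Since $h$ has a section, $\Omega N\simeq\Omega S^{3}\times\Omega F$. Identifying $F$ requires a Ganea/Gray–type analysis of the fibre of $S^{3}\vee A\to S^{3}$, which is $A\rtimes\Omega S^{3}$, together with the attachment of the top cell. The top cell is attached to $S^{3}\times S^{5}$-type cells, and after passing to the fibre it cancels the $S^{5}\wedge\Omega S^{3}$ part, leaving
\[F\simeq A\vee(B\wedge\Omega S^{3}).\]
\end{enumerate}
Combining these with Lemma~\ref{lem:initialdecomp6dim} yields the stated equivalence.

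The main obstacle is making sure \cite[Theorem 1.1]{ST} applies exactly as quoted. Its hypotheses are that $N$ is a $2$-connected Poincar\'{e} duality complex with $S^{3}$ retracting off it, and these are granted by Lemma~\ref{lem:initialdecomp6dim} and the assumption. The remaining check is that the $B$ produced there is the one defined above, via the cofibration $B\to A\to S^{5}$. If a more self-contained argument were needed, I would first redo step (2) directly: compute the homotopy fibre of $N\to S^{3}$ from the cofibration $S^{7}\to S^{3}\vee A\to N$ using the Mather cube lemma, and show that the cell dual to $S^{3}$ kills the $S^{5}\rtimes\Omega S^{3}$ piece.
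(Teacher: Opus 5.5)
Your proposal is correct and is the paper's argument. Lemma~\ref{lem:initialdecomp6dim} gives $\Omega M\simeq S^{1}\times S^{1}\times\Omega N$, and the retraction of $S^{3}$ off $N$ (transferred to the co-$H$-space $\overline{N}\simeq S^{3}\vee A$, with the Poincar\'{e} dual class $y\in H^{5}(A)$ determining $B\to A\to S^{5}$) is exactly the input to \cite[Theorem 1.1]{ST}, which yields $\Omega N\simeq\Omega S^{3}\times\Omega(A\vee(B\wedge\Omega S^{3}))$; your sketch of the internals of \cite{ST} is not needed, since the paper simply quotes that theorem.
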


\begin{remark}\label{rem:decompforM} 
    By \cite[Example 6.4]{T2}, the hypothesis that there is an $S^{3}$ retracting off $N$ always holds when localised away from $2$ and $3$, in which case Theorem \ref{thm:decompforM} also always holds.
\end{remark}

The decomposition in Theorem \ref{thm:decompforM} can be further refined when localised away from $2$ and $3$,  provided $H_*(M;\mathbb{Z})$ is torsion-free and $H_3(M;\mathbb{Z})\cong 0$. (It is worth noting that when $M$ satisfies the stronger property of being a closed manifold and  $H_3(M;\mathbb{Z})$ is torsion-free, by \cite{W} there is a diffeomorphism $M \cong M' \conn (S^3 \times S^3)^{\conn k}$ where $M'$ is a simply-connected $6$-dimensional manifold with $H_3(M';\mathbb{Z}) = 0$. It can then be shown using \cite[Theorem 9.1]{T1} that the attaching map of the top cell of $M$ is integrally inert and a loop space decomposition can be obtained.)

We begin by identifying the homotopy type of $\overline{N}$ localised away from $2$.
\begin{lemma}
\label{lem:skelN}
If $N$ is defined as in Lemma \ref{lem:initialdecomp6dim}, then localised away from $2$, $\overline{N}$ is homotopy equivalent to a wedge of spheres and Moore spaces.
\end{lemma}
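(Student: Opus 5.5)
We want to show that, localised away from $2$, the $7$-skeleton $\overline{N}$ splits as a wedge of spheres and Moore spaces. The plan is to build $\overline{N}$ cell by cell from its homology and to check that every attaching map that could obstruct a splitting is trivial for degree reasons once $2$ is inverted.

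First we record the structure of $\overline{N}$. By (\ref{8dimcohomology}), $\overline{N}$ is $2$-connected, $5$-dimensional, and $\overline{N}\simeq\Sigma N'$ with all attaching maps in the stable range. Its integral homology is $\mathbb{Z}^{d}\oplus T$ in degree $3$, $\mathbb{Z}^{d'}\oplus T$ in degree $4$, and $\mathbb{Z}^{d}$ in degree $5$ (the degree-$5$ group is free since $H_5(N)\cong H_5(\overline{N})$). Take a minimal cell structure realising this homology:
\begin{itemize}
\item $3$-cells and $4$-cells form a wedge of $S^3$'s and Moore spaces $P^4(q^r)$ for the cyclic summands of $T$ in $H_3$;
\item further $4$-cells and $5$-cells give the free part of $H_4$ and the Moore spaces $P^5(q^r)$ for the summands of $T$ in $H_4$;
\item $5$-cells give $H_5$.
\end{itemize}

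Second, we build the $4$-skeleton. It is $\Sigma$ of a $3$-dimensional complex with $H_2$ free on cells of dimension $\le 3$. The attaching maps of $4$-cells land in $\pi_3$ of a wedge of $S^3$'s, which is detected by homology. So the $4$-skeleton is $X_4\simeq\bigvee S^3\vee\bigvee P^4(q^r)\vee\bigvee S^4$; this uses the standard splitting of a $2$-connected $4$-dimensional complex into a wedge of spheres and Moore spaces.

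Third, we attach the $5$-cells to $X_4$ by a map $\phi\colon\bigvee S^4\to X_4$. Since we are in the stable range, $\pi_4(X_4)$ is a sum over wedge summands (Hilton--Milnor cross terms lie in higher degrees). The relevant groups are:
\begin{itemize}
\item $\pi_4(S^3)=\mathbb{Z}/2$, which vanishes away from $2$;
\item $\pi_4(S^4)=\mathbb{Z}$;
\item $\pi_4(P^4(q^r))$, which sits in $\pi_4(S^3)\to\pi_4(P^4(q^r))\to\pi_4(S^4)\xrightarrow{q^r}\pi_3(S^3)$. Away from $2$ the first group vanishes and the last map is injective, so $\pi_4(P^4(q^r))=0$.
\end{itemize}
Hence $\phi$ factors through $\bigvee S^4$ and is determined by an integer matrix, namely the boundary map in cellular homology. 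Change bases in the domain and target by Smith normal form, which can be realised by self-equivalences of wedges of spheres. This splits the complex into copies of $S^5$, copies of $P^5(q^r)$, and the remaining $S^4$'s. Combined with the summands from the second step, $\overline{N}$ is then a wedge of spheres and Moore spaces.

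The main obstacle is the third step: justifying that the composite attaching map decomposes summand-wise, and that the base change realised by self-maps of wedges does not reintroduce $S^3$ or $P^4$ components. That requires the vanishing of $\pi_4(S^3)$ and $\pi_4(P^4(q^r))$, both of which hold exactly after inverting $2$. If a $q=2$ summand appears in $T$, it disappears after localisation anyway.
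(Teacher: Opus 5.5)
Your argument is correct and follows essentially the same route as the paper. The paper uses a homology decomposition and shows that the attaching maps $f_4$ and $f_5$ are null homotopic, whereas you use a minimal cell structure and apply Smith normal form to the $S^4$-components of the $5$-cell attaching map; in both, because $\pi_4$ of the wedge splits in this range, the only obstructions lie in $\pi_4(S^3)$ and $\pi_4(P^4(q^r))$, and these vanish once $2$ is inverted.
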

\begin{proof}
    Since $\overline{N}$ is simply-connected, it has a homology decomposition, which is a sequence of homotopy cofibrations \[M_t \xrightarrow{f_t} N_{t-1} \rightarrow N_t,\] for $2 \leq t \leq 7$, with $N_7 = \overline{N}$, each $M_t$ is a wedge of $(t-1)$-dimensional spheres and $t$-dimensional Moore spaces, and $f_t$ is homologically trivial. Since $\overline{N}$ is $2$-connected, $N_1,N_{2} = *$. Moreover, since $N$ is a $2$-connected, $8$-dimensional Poincar\'{e} duality complex, $N_5=N_6=N_7 = \overline{N}$. 
    
     The description of $H_{3}(N;\mathbb{Z})$ in~(\ref{8dimcohomology}) implies that $N_3 \simeq \bigvee_{i=1}^d S^3 \vee P^4(T)$. The description of $H_{4}(N;\mathbb{Z})$ in~(\ref{8dimcohomology}) implies that there is a homotopy cofibration \[\bigvee\limits_{i=1}^{d'} S^3 \vee P^4(T) \xrightarrow{f_4} \bigvee\limits_{i=1}^{d} S^3 \vee P^4(T) \rightarrow N_4.\] Since $f_4$ induces the zero map in homology, the Hurewicz isomorphism implies that $f_{4}$ is null homotopic when restricted to the $3$-skeleton of its domain. Thus $f_{4}$ factors as a composite 
\[\bigvee\limits_{i=1}^{d'} S^{3}\vee P^{4}(T)\xrightarrow{p_2} P^{4}(T)\xrightarrow{q} S^{4}\xrightarrow{g_4}\bigvee\limits_{i=1}^{d} S^{3}\vee P^4(T)\] 
where $p_2$ is the pinch map to the right wedge summand, $q$ collapses out the $3$-skeleton of $P^4(T)$ and $g_4$ is some map. The Hilton-Milnor Theorem implies that $f_4$ is determined by the pinch maps onto each wedge summmand. 
As we are localised away from $2$, it is well known that $\pi_{4}(S^{3})\cong 0$ and, by~\cite[Lemma 3.3]{S} for example, $\pi_{4}(P^{4}(T))\cong 0$. Thus $g_{4}$ is null homotopic, implying that $f_{4}$ is null homotopic. Hence, $N_4 \simeq \bigvee_{i=1}^d S^3 \vee P^4(T) \vee \bigvee_{i=1}^{d'} S^4 \vee P^5(T)$.

    Finally, the description of $H_{5}(N;\mathbb{Z})$ in~(\ref{8dimcohomology}) implies that there is a homotopy cofibration \[\bigvee\limits_{i=1}^d S^4 \xrightarrow{f_{5}} \bigvee_{i=1}^d S^3 \vee P^4(T) \vee \bigvee_{i=1}^{d'} S^4 \vee P^5(T) \rightarrow N_{5}\simeq\overline{N}.\] The Hilton-Milnor theorem implies that $f_{5}$ is determined by the pinch map onto each wedge summand. Since $f_{5}$ induces the zero map in homology, the Hurewicz isomorphism implies that $f_{5}$ composed with the pinch map onto an $S^4$ summand or $P^5(T)$ summand is null homotopic. As in the previous paragraph, since we have localised away from $2$, the pinch map onto an $S^{3}$ or $P^{4}(T)$ summand is also null homotopic. Hence, $f_{5}$ is null homotopic, implying that 
\[\overline{N}\simeq\bigvee_{i=1}^d S^3 \vee P^4(T) \vee \bigvee_{i=1}^{d'} S^4 \vee P^5(T)\vee\bigvee_{i=1}^{d} S^{5}.\] 
\end{proof}


\begin{theorem}
\label{thm:refineddecomp}
     Let $M$ be a simply-connected $6$-dimensional Poincar\'{e} duality complex. Assume that $H^2(M;\mathbb{Z}) \cong \mathbb{Z} \oplus \mathbb{Z}$, $H_3(M;\mathbb{Z})\cong 0$, and $H_*(M;\mathbb{Z})$ is torsion-free. Then localised away from $2$ and~$3$, there is a homotopy equivalence \[\Omega M \simeq S^1 \times S^1 \times \Omega S^3 \times \Omega (A \vee (B \wedge \Omega S^3))\] where $A$ and $B$ are wedges of spheres and Moore spaces.
\end{theorem}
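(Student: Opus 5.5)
The proof reduces Theorem~\ref{thm:refineddecomp} to Theorem~\ref{thm:decompforM}. The remaining work is to identify $A$ and $B$ as wedges of spheres and Moore spaces.

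First, localise away from $2$ and $3$. By Remark~\ref{rem:decompforM}, the auxiliary complex $N$ of Lemma~\ref{lem:initialdecomp6dim} has a map $h\colon N\to S^3$ with a right homotopy inverse. Theorem~\ref{thm:decompforM} then gives
\[\Omega M\simeq S^1\times S^1\times\Omega S^3\times\Omega(A\vee(B\wedge\Omega S^3)),\]
where $\overline{N}\simeq S^3\vee A$ and there is a homotopy cofibration $B\to A\to S^5$. So it suffices to show that $A$ and $B$ are, locally, wedges of spheres and Moore spaces.

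Next, I compute the homology of $N$ from the hypotheses on $M$, to pin down the pieces of~(\ref{8dimcohomology}). Since $H_\ast(M;\mathbb{Z})$ is torsion-free with $H_2\cong\mathbb{Z}^2$ and $H_3=0$, Poincar\'e duality gives $H_4(M)\cong\mathbb{Z}^2$ and $H_5(M)=0$. The Serre spectral sequence for the fibration $S^1\times S^1\to N\to M$ then shows that $H_\ast(N)$ is torsion-free, i.e.\ $T=0$. Alternatively, one can avoid this computation, since Lemma~\ref{lem:skelN} already allows Moore spaces. By Lemma~\ref{lem:skelN},
\[\overline{N}\simeq\bigvee_{i=1}^d S^3\vee P^4(T)\vee\bigvee_{i=1}^{d'}S^4\vee P^5(T)\vee\bigvee_{i=1}^d S^5.\]

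The retraction of $S^3$ off $\overline{N}$ splits off one $S^3$ summand. The complement $A$ is the homotopy cofibre of $S^3\to\overline{N}$, which is $\Sigma$ of something in the stable range. To identify it, I would argue that the inclusion $S^3\to\overline{N}$ can be taken, after a self-equivalence of the wedge, to be the inclusion of a wedge summand. In homology the composite $S^3\to\overline{N}\to S^3$ is the identity, so the inclusion's Hurewicz image is a primitive class in $\mathbb{Z}^d$. The components of the inclusion into the higher wedge summands lie in $\pi_3$ of $4$- and $5$-dimensional spheres and Moore spaces, and these groups are zero. A change of basis of the $S^3$ summands (a self-map of the wedge, available since the space is a suspension) then makes the inclusion standard. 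Hence
\[A\simeq\bigvee_{i=1}^{d-1}S^3\vee P^4(T)\vee\bigvee_{i=1}^{d'}S^4\vee P^5(T)\vee\bigvee_{i=1}^d S^5.\]

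For $B$, the map $A\to S^5$ detects the class $y\in H^5(A)$ dual, under cup product, to the $S^3$ class. Arguing as before, I would show this map is homotopic to a pinch onto one $S^5$ summand, after a change of basis among the $S^5$ summands. This uses the following facts.
\begin{itemize}
\item Its restrictions to the lower summands lie in $[S^3,S^5]$, $[P^4(T),S^5]$, $[S^4,S^5]$ and $[P^5(T),S^5]$, all of which are trivial.
\item On $\bigvee S^5$ it is given by degrees forming a primitive vector, since $y$ is indivisible by Poincar\'e duality.
\end{itemize}
Then $B$ is the homotopy fibre of the pinch, and I expect the subtle point to be here. Because $A\to S^5$ is a cofibration here rather than a fibration, $B$ should be the complementary wedge, i.e.\ $A$ with one $S^5$ removed, since the cofibration $B\to A\to S^5$ is split. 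This needs care in matching the $B$ of \cite[Theorem 1.1]{ST} with this split cofibre complement. If necessary, I would use that $B$ is determined up to homotopy as the $4$-skeleton-plus-remaining-cells complement in a split cofibration of suspensions.

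The main obstacle is the change-of-basis step: showing that the inclusion of $S^3$ and the map to $S^5$ are, up to self-equivalences of the wedge, standard summand inclusions and pinches. This relies on the vanishing of the relevant low homotopy groups away from $2$ and $3$, and on the primitivity of the homology classes.
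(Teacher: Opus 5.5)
Your overall route is the paper's. Theorem~\ref{thm:decompforM} with Remark~\ref{rem:decompforM} gives the decomposition, $A$ comes from $\overline{N}\simeq S^3\vee A$ and Lemma~\ref{lem:skelN}, and $B$ comes from the splitting of $B\to A\to S^5$, which the paper simply quotes from \cite[Corollary~3.2]{ST}. However, the hand-made argument you give in place of these last two steps contains errors.

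First, your claim that $H_*(N;\mathbb{Z})$ is torsion-free is false. By Lemma~\ref{lem:homologyN} its torsion is that of $\mathrm{coker}(U)$, which depends on the cup products. For example, it is $(\mathbb{Z}/p)^2$ if $x_1^2=py_1$, $x_2^2=py_2$, $x_1x_2=0$, and this survives localisation away from $2$ and $3$ when $p\geq 5$. You hedge on this point, but your vanishing claims are only valid when $T=0$. In fact $\pi_3(P^4(T))\cong T$ and $[P^5(T),S^5]\cong H^5(P^5(T);\mathbb{Z})\cong T$. So the section $S^3\to\overline{N}$ can have a nonzero component into $P^4(T)$, and $q\colon A\to S^5$ can have a nonzero component $\tau$ on $P^5(T)$. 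Both can be absorbed by self-equivalences that are the identity in homology. For instance, using the co-$H$ structure, replace $\iota_{P^5(T)}$ by $\iota_{P^5(T)}-\iota_{S^5_1}\circ\tau$, where $S^5_1$ is the summand detected by $q$; treat the section similarly. But as written, the step is wrong.

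Second, you leave the identification of $B$ unresolved, and $B$ is not a homotopy fibre here. The missing step is as follows. Extending the cofibration gives $A\xrightarrow{q}S^5\to\Sigma B$, so $\Sigma B$ is the cofibre of $q$. Once $q$ is the pinch onto a summand of $A\simeq A''\vee S^5$, this cofibre is $\Sigma A''$. Since $B$ is $5$-dimensional and $A''$ is $2$-connected, Freudenthal makes $[B,A'']\to[\Sigma B,\Sigma A'']$ surjective. A desuspension of the equivalence is then a homology isomorphism, so $B\simeq A''$.

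A cleaner repair avoids both change-of-basis steps. Both $A$ and $B$ are simply-connected, with homology only in degrees $3,4,5$ and with $H_5$ free. For $B$ this holds because the homology sequence of $B\to A\to S^5$ embeds $H_5(B)$ in $H_5(A)$ and gives $H_k(B)=0$ for $k\geq 6$. The homology-decomposition argument of Lemma~\ref{lem:skelN} then applies verbatim to each of $A$ and $B$ after localising away from~$2$.
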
 

\begin{proof} 
 Localise away from $2$ and $3$. By Theorem~\ref{thm:decompforM} and Remark~\ref{rem:decompforM}, there is a homotopy equivalence $\Omega M \simeq S^1 \times S^1 \times \Omega S^3 \times \Omega (A \vee (B \wedge \Omega S^3))$. 
By definition of $A$, there is a homotopy equivalence $\overline{N}\simeq S^{3}\vee A$, so as $\overline{N}$ is homotopy equivalent to a wedge of spheres and Moore spaces by Lemma~\ref{lem:skelN}, so is $A$. By definition of $B$, there is a homotopy cofibration $B\rightarrow A\rightarrow S^{5}$ and, as in~\cite[Corollary~3.2]{ST}, the fact that $A$ is homotopy equivalent to a wedge of spheres and Moore spaces implies that this homotopy cofibration splits so that $B$ is also homotopy equivalent to a wedge of spheres and Moore spaces.
\end{proof}

 The analysis can be taken a step further to examine the torsion that appears in $H_{\ast}(\Omega M;\mathbb{Z})$. To begin, we calculate $H_*(N;\mathbb{Z})$. Recall that $H^2(M;\mathbb{Z}) \cong \mathbb{Z} \oplus \mathbb{Z}$. Let $x_1,x_2 \in H^2(M;\mathbb{Z})$ be generators of the first and second $\mathbb{Z}$-summand respectively. The Universal Coefficient Theorem and Poincar\'{e} duality imply $H^2(M;\mathbb{Z}) \cong H^4(M;\mathbb{Z}) \cong \mathbb{Z} \oplus \mathbb{Z}$. Let $y_1,y_2 \in H^4(M;\mathbb{Z})$ be generators of the first and second $\mathbb{Z}$ summand respectively. In cohomology, $x_1^2 = ay_1+by_2$ and $x_2^2 = cy_1+dy_2$, $x_1x_2 = ey_1+fy_2$ for some integers $a,b,c,d,e,f$.

\begin{lemma}
\label{lem:homologyN}
Suppose that $H_*(M;\mathbb{Z})$ is torsion-free, $H_2(M;\mathbb{Z})$ is of rank $2$, and $H_3(M;\mathbb{Z})\cong 0$. Let $U = \begin{pmatrix}a & e & c \\ b & f & d\end{pmatrix}$ and $V= \begin{pmatrix}-e & -f & a & b \\ -c & -d & e & f\end{pmatrix}^T$. Then $H^3(N;\mathbb{Z}) \cong \ker(U)$, $H^4(N;\mathbb{Z}) \cong \mathrm{coker}(U) \oplus \ker(V)$ and $H^5(N;\mathbb{Z}) \cong \mathrm{Tor}(\mathrm{coker}(U)) \oplus \ker(U)$, where $\mathrm{Tor}$ denotes the torsion subgroup. Moreover, $H_*(N)$ contains $p$-torsion if and only if $\mathrm{coker}(U)$ contains $p$-torsion.
\end{lemma}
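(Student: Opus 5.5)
The plan is to compute $H^*(N;\mathbb{Z})$ from the cohomology Serre spectral sequence of the homotopy fibration $S^1\times S^1\xrightarrow{\alpha} N\to M$ from Lemma~\ref{lem:initialdecomp6dim}. This fibration is the pullback along $s=(r_1,r_2)$ of the path fibration over $\mathbb{C}P^\infty\times\mathbb{C}P^\infty$, so it is a principal torus fibration and $\pi_1(M)=0$ acts trivially. The $E_2$-page is therefore
\[E_2=H^*(M;\mathbb{Z})\otimes\Lambda(u_1,u_2),\qquad |u_i|=1,\qquad d_2(u_i)=x_i .\]

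First record $H^*(M;\mathbb{Z})$ under the hypotheses, using the Universal Coefficient Theorem, Poincar\'e duality, torsion-freeness and $H_3(M)=0$. It is $\mathbb{Z}$ in degrees $0$ and $6$, $\mathbb{Z}\{x_1,x_2\}$ in degree $2$, $\mathbb{Z}\{y_1,y_2\}$ in degree $4$, and zero otherwise. The $E_2$-page has only the rows $q=0,1,2$ and only even columns. Hence $d_3\colon E_3^{p,2}\to E_3^{p+3,0}$ lands in an odd column, which is zero, so $E_3=E_\infty$. This is just the homology of the Koszul-type complex built from $d_2$.

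Next compute the relevant entries with explicit bases.

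In total degree $3$, the only nonzero term is $E_3^{2,1}$. It is the kernel of
\[d_2\colon E_2^{2,1}=\mathbb{Z}\{x_iu_j\}\to H^4,\qquad x_iu_j\mapsto x_ix_j,\]
modulo the image of $d_2(u_1u_2)=x_1u_2-x_2u_1$. Passing to the quotient $\mathbb{Z}^4/\langle x_1u_2-x_2u_1\rangle\cong\mathbb{Z}^3$, with basis the classes of $x_1u_1$, $x_1u_2$ and $x_2u_2$, the map becomes $U$ in the bases $\{y_1,y_2\}$. The quotient is free and the antisymmetric element maps to zero, so $H^3(N)\cong E_\infty^{2,1}\cong\ker U$.

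In total degree $4$, $E_\infty^{4,0}=\mathrm{coker}(d_2\colon E^{2,1}\to E^{4,0})=\mathrm{coker}\,U$. Also $E_\infty^{2,2}=\ker(d_2\colon H^2u_1u_2\to H^4\otimes\langle u_1,u_2\rangle)$. Computing $x_1u_1u_2\mapsto x_1^2u_2-x_1x_2u_1$, and similarly for $x_2$, in the basis $y_1u_1,y_2u_1,y_1u_2,y_2u_2$ gives exactly the matrix $V$. So $E_\infty^{2,2}=\ker V$. This group is free, so the extension $0\to\mathrm{coker}\,U\to H^4(N)\to\ker V\to0$ splits. That gives $H^4(N)\cong\mathrm{coker}(U)\oplus\ker(V)$.

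For $H^5$, rather than computing $E^{4,1}$ directly, use that $N$ is an $8$-dimensional Poincar\'e duality complex. Then $H^5(N)\cong H_3(N)$, and by the Universal Coefficient Theorem
\[H_3(N)\cong \mathrm{Free}(H^3(N))\oplus\mathrm{Tor}(H^4(N))=\ker U\oplus\mathrm{Tor}(\mathrm{coker}\,U),\]
because $\ker U$ and $\ker V$ are free.

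For the torsion statement, note that $N$ is $2$-connected. Poincar\'e duality then gives $H^6(N)\cong H_2(N)=0$ and $H^7(N)\cong H_1(N)=0$, together with $H^0=H^8=\mathbb{Z}$, $H^1=H^2=0$ and $H^3$ free. The only torsion in $H^*(N)$ is therefore $\mathrm{Tor}(\mathrm{coker}\,U)$, in degrees $4$ and $5$. By the Universal Coefficient Theorem, $H_*(N)$ has $p$-torsion if and only if $\mathrm{coker}(U)$ does.

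The main obstacle I expect is the bookkeeping in the $d_2$ differentials: fixing the sign convention for $d_2(u_1u_2)$, and checking that the identification of $E_3^{2,1}$ with $\ker U$, and of $E_3^{2,2}$ with $\ker V$, matches the stated matrices, including the ordering of the columns of $U$.
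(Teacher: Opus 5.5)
Your proposal is correct and follows essentially the same route as the paper. Both use the cohomology Serre spectral sequence of $S^1\times S^1\to N\to M$, which collapses at $E_3$; then $H^3(N)\cong E_\infty^{2,1}\cong\ker U$, $H^4(N)$ is the split extension of the free group $E_\infty^{2,2}\cong\ker V$ by $E_\infty^{4,0}\cong\mathrm{coker}\,U$, and $H^5(N)$ and the torsion claim follow from Poincar\'e duality and the Universal Coefficient Theorem. The one difference is cosmetic: the paper first normalises the bases so that $x_iy_j=\delta_{ij}z$, which plays no role in the differentials you compute.
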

\begin{proof}
    Cohomology will be taken with $\mathbb{Z}$ coefficients throughout. We calculate $H^*(N)$ as a module using the Serre spectral sequence applied to the homotopy fibration $S^1 \times S^1 \to N \to M$. The $E_2$ page has the form $E_2^* \cong H^*(M) \otimes H^*(S^1 \times S^1)$. Let 
\begin{align*} 
    H^2(M) & \cong \mathbb{Z}\{x_1,x_2\} & H^1(S^1 \times S^1) & \cong \mathbb{Z}\{\epsilon_1,\epsilon_2\} \\
    H^4(M) & \cong \mathbb{Z}\{y_1,y_2\} & H^2(S^1 \times S^1) & \cong \mathbb{Z}\{\epsilon_1\epsilon_2\}. \\ 
    H^6(M) & \cong \mathbb{Z}\{z\} & & 
  \end{align*} 

Poincar\'e Duality implies the map $\phi: H^2(M) \to \mathrm{Hom}(H^4(M),H^6(M))$ sending $x \in H^2(M)$ to the homomorphism defined by $\phi(x_i)(y) = xy$ for $y \in H^4(M)$ is an isomorphism. Let $y_1$ and $y_2$ be a basis of $H^4(M)$, and let $y_1^*$, and $y_2^*$ be the dual basis of $\mathrm{Hom}(H^4(M),H^6(M))$. Since $\phi$ is an isomorphism, there exist basis elements $x_1, x_2 \in H^2(M)$ such that $\phi(x_1)=y_1^*$ and $\phi(x_2)=y_2^*$. By definition of $\phi$, we obtain $x_iy_j = \phi(x_i)(y_j) = y_i^*(y_j) = \delta_{ij}z$. Therefore, we can assume $x_1 y_1 = x_2y_2=z$, and $x_1y_2=x_2y_1=0$. 

    For dimensional reasons, the only possibly non-trivial differentials are $d_2$, hence $E_3 \cong E_\infty$. By Lemma \ref{lem:initialdecomp6dim}, $N$ is $2$-connected. It follows that without loss of generality that the generators $\epsilon_1,\epsilon_2$ can be chosen so that $d_2(\epsilon_1)=x_1$ and $d_2(\epsilon_2) = x_2$. By~(\ref{8dimcohomology}), the only possibly non-trivial cohomology groups of $N$ are $H^3(N) \cong \mathbb{Z}^{d}$, $H^4(N) \cong \mathbb{Z}^{d'} \oplus T$, $H^5(N) \cong \mathbb{Z}^{d} \oplus T$, and $H^8(N) \cong \mathbb{Z}$, where $T$ is a finite abelian group. Hence, it suffices to calculate $H^3(N)$ and $H^4(N)$.
    
     First, we calculate $H^3(N)$. Consider $d_2^{0,2}: E_2^{0,2} = \mathbb{Z}\{\epsilon_1\epsilon_2\} \to E_2^{2,1} = \mathbb{Z}\{\epsilon_1x_1,\epsilon_1x_2,\epsilon_2x_1,\epsilon_2x_2\}$. Since $d_2$ is a derivation, \[d_2(\epsilon_1\epsilon_2) = \epsilon_2x_1 - \epsilon_1x_2.\] Next consider $d_2^{2,1}: E_2^{2,1} = \mathbb{Z}\{\epsilon_1x_1,\epsilon_1x_2,\epsilon_2x_1,\epsilon_2x_2\} \to E_2^{4,0} = \mathbb{Z}\{y_1,y_2\}$. For dimensional reasons, $d_2(x_1) = d_2(x_2) = 0$. Recall $x_1^2 = ay_1+by_2$, $x_2^2 = cy_1+dy_2$, and $x_1x_2 = ey_1+fy_2$ for some integers $a,b,c,d,e,f$. As $d_{2}$ is a derivation, we obtain 
\begin{align*} 
  d_2(\epsilon_1x_1) & =x_1^2= ay_1+by_2 & d_2(\epsilon_1x_2) & = x_1x_2 =ey_1+fy_2 \\ 
  d_2(\epsilon_2x_1) & =x_2x_1 = ey_1+fy_2 &  d_2(\epsilon_2x_2) & =x_2^2= cy_1+dy_2. 
\end{align*} 
     Let $U=\begin{pmatrix}a & e & c \\ b & f & d\end{pmatrix}$. Putting the calculations together, $E_3^{2,1} \cong \ker(d_2^{2,1})/\mathrm{im}(d_2^{0,2}) \cong \ker(U)$, implying that $H^3(N) \cong \ker(U)$.

     Next, we calculate $H^4(N)$. From the calculation of $d_2^{2,1}$, we obtain $E_3^{4,0} \cong \mathrm{coker}(U)$. Now consider $d_2^{2,2}:E_2^{2,2} = \mathbb{Z}\{\epsilon_1\epsilon_2x_1,\epsilon_1\epsilon_2x_2\} \to E_2^{4,1}= \mathbb{Z}\{\epsilon_1y_1,\epsilon_2y_1,\epsilon_1y_2,\epsilon_2y_2\}$. For dimensional reasons, $d(y_1)=d(y_2) = 0$. Since $d_2$ is a derivation, $x_1y_1 = x_2y_2=z$, and $x_1y_2 = x_2y_1 =  0$, we obtain \[d_2(\epsilon_1\epsilon_2x_1) = -e\epsilon_1y_1-f\epsilon_1y_2+a\epsilon_2y_1+b\epsilon_2y_2\]\[d_2(\epsilon_1\epsilon_2x_2) = -c\epsilon_1y_1-d\epsilon_1y_2+e\epsilon_2y_1+f\epsilon_2y_2.\] Therefore, if $V = \begin{pmatrix}-e & -f & a & b \\ -c & -d & e & f\end{pmatrix}^T$ then $E^{2,2}_3 \cong \ker(V)$. Since $E^{2,2}_\infty$ is free, we obtain $H^4(N) \cong \mathrm{coker}(U) \oplus \ker(V)$. 

      Finally, the assertion that $H^5(N;\mathbb{Z}) \cong \mathrm{Tor}(\mathrm{coker}(U)) \oplus \ker(U)$ follows from the Universal Coefficient Theorem.
\end{proof} 

    In \cite{BB,BT1}, decompositions of the loops on an $(n-1)$-connected, $(2n)$-dimensional Poincar\'{e} duality complex under mild cohomological conditions is given. Similar decompositions are given in \cite{B,BT2,BW,HT} for certain families of $(n-1)$-connected, $(2n+1)$-dimensional Poincar\'{e} duality complexes. A consequence of these decompositions is that the torsion in $H_*(\Omega M;\mathbb{Z})$ is the same as the torsion in $H_*(M;\mathbb{Z})$. In the case of a simply-connected $6$-dimensional Poincar\'{e} duality complex~$M$ the torsion in $H_{\ast}(\Omega M;\mathbb{Z})$ can also depend on the cup product structure. 
    
\begin{proposition} 
  \label{6dimlooptorsion} 
  Let $M$ be a simply-connected $6$-dimensional Poincar\'{e} duality complex. Assume that $H^2(M;\mathbb{Z}) \cong \mathbb{Z} \oplus \mathbb{Z}$, $H_3(M;\mathbb{Z})\cong 0$, and $H_*(M;\mathbb{Z})$ is torsion-free. With $N$ as in Lemma \ref{lem:initialdecomp6dim}, suppose there is a map $N \to S^3$ which has a right homotopy inverse. Then the torsion in $H_*(\Omega M;\mathbb{Z})$ is determined by the cohomology ring structure of $H^*(M;\mathbb{Z})$. 
\end{proposition}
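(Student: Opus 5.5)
We start from the decomposition of Theorem~\ref{thm:decompforM}:
\[\Omega M\simeq S^1\times S^1\times\Omega S^3\times\Omega(A\vee(B\wedge\Omega S^3)).\]
The factors $S^1$ and $\Omega S^3$ have torsion-free integral homology, so by the K\"unneth theorem the torsion in $H_*(\Omega M;\mathbb{Z})$ is determined by the torsion in $H_*(\Omega(A\vee(B\wedge\Omega S^3));\mathbb{Z})$, together with the free ranks of everything involved. The aim is therefore to express the integral homology of $A$ and $B$ in terms of $H_*(N;\mathbb{Z})$, and then to use Lemma~\ref{lem:homologyN} to express $H_*(N;\mathbb{Z})$ in terms of the integers $a,b,c,d,e,f$ describing the cup product structure of $H^*(M;\mathbb{Z})$.

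The first step is to determine $\widetilde{H}_*(A)$. Since $\overline{N}\simeq S^3\vee A$ and $\overline{N}$ is the $7$-skeleton of $N$, we have $\widetilde{H}_*(A)\cong \widetilde{H}_*(\overline{N})/H_*(S^3)$. By (\ref{8dimcohomology}) and Lemma~\ref{lem:homologyN}, together with the Universal Coefficient Theorem, this is
\[\widetilde{H}_*(A)\cong \mathbb{Z}^{d-1}\oplus T \text{ in degree } 3,\quad \mathbb{Z}^{d'}\oplus T \text{ in degree } 4,\quad \mathbb{Z}^{d} \text{ in degree } 5,\]
where $d=\mathrm{rank}\ker U$, $d'=\mathrm{rank}\ker V$ and $T=\mathrm{Tor}(\mathrm{coker}\, U)$. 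The second step uses the cofibration $B\to A\to S^5$, in which the map to $S^5$ is onto a free $\mathbb{Z}$-summand in $H_5$. This gives $H_*(B)$ as $H_*(A)$ with one copy of $\mathbb{Z}$ removed in degree $5$.

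The third step computes the homology of the loop space. Since $B\wedge\Omega S^3$ is a suspension, the Bott--Samelson theorem applies to $\Omega(A\vee(B\wedge\Omega S^3))$: a wedge of a co-$H$-space and a suspension. The cleanest route is to note that $\overline{N}$ is a suspension, so $A$ is a retract of a suspension and $\Sigma\Omega A$ splits as in the James splitting. Then
\[H_*(\Omega(A\vee(B\wedge\Omega S^3));\mathbb{Z})\cong T\bigl(\widetilde{H}_{*-1}(A)\oplus \widetilde{H}_{*-1}(B\wedge\Omega S^3)\bigr)\]
holds at least additively, with the tensor algebra interpreted as the homology of iterated smashes. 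Its torsion is obtained from the K\"unneth formula applied to smash products of $A$, $B$ and $\Omega S^3$. This shows that the torsion is a functor of the graded groups $\widetilde{H}_*(A)$ and $\widetilde{H}_*(B)$, including the Tor terms, and hence of $d$, $d'$ and $T$.

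The main obstacle is justifying that the torsion of $H_*(\Omega X)$ depends only on $\widetilde{H}_*(X)$ for $X=A\vee(B\wedge\Omega S^3)$, without appealing to the wedge-of-Moore-spaces description, which is only available away from $2$ and $3$. The plan is to use that $A$ is a retract of the suspension $\overline{N}\simeq\Sigma N'$, so $A\vee(B\wedge\Omega S^3)$ is a retract of a suspension $\Sigma Y$. For a suspension, $H_*(\Omega\Sigma Y;\mathbb{Z})\cong T(\widetilde{H}_*(Y))$ integrally when $\widetilde{H}_*(Y)$ is free, and additively $\widetilde{H}_*(\Omega\Sigma Y)\cong\bigoplus_k \widetilde{H}_*(Y^{\wedge k})$ by the James splitting in general. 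Taking the retract then yields the statement. Combining these steps, the torsion in $H_*(\Omega M;\mathbb{Z})$ depends only on $\mathrm{coker}\, U$, $\ker U$ and $\ker V$, which are built from the cup products $x_ix_j$ in $H^*(M;\mathbb{Z})$. In particular, $p$-torsion occurs if and only if $\mathrm{coker}\, U$ has $p$-torsion.
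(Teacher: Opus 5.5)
This is essentially the paper's argument (the decomposition of Theorem~\ref{thm:decompforM}, then the homology of $A$ and $B$ via the homologically split cofibration $B\to A\to S^5$, then Lemma~\ref{lem:homologyN}), and your James-splitting/K\"unneth step supplies a justification the paper leaves implicit: it explains why the graded groups $H_*(A)$ and $H_*(B)$, free parts as well as torsion, control the torsion of $H_*(\Omega M)$. Two small repairs are needed: first, ``taking the retract'' does not by itself identify $H_*(\Omega X)$, since a summand of $H_*(\Omega\Sigma Y)$ is not determined, so instead observe that $A$ and $B$ are $2$-connected of dimension at most $5$ and hence are themselves suspensions by the same stable-range argument that gives $\overline{N}\simeq\Sigma N'$, which makes $A\vee(B\wedge\Omega S^3)$ a suspension to which the James splitting applies directly; second, $d'$ is the free rank of $\mathrm{coker}\,U\oplus\ker V$ rather than $\mathrm{rank}\ker V$ (indeed $d'=2d-2$ since $\chi(N)=0$), though this does not affect your conclusion.
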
 

\begin{proof} 
 By Theorem~\ref{thm:decompforM}, $\Omega M\simeq S^{1}\times S^{1}\times\Omega S^{3}\times\Omega(A\vee(B\wedge\Omega S^{3})$. Thus the torsion in $H_{\ast}(\Omega M;\mathbb{Z})$ is determined by the torsion in $H_{\ast}(A;\mathbb{Z})$ and $H_{\ast}(B;\mathbb{Z})$. By definition, $A$ is a retract of $\overline{N}$ and $B$ satisfies a homotopy cofibration $B\rightarrow A\rightarrow S^{5}$. By its construction in~\cite{ST}, this cofibration splits in homology. Thus the torsion in $H_{\ast}(A;\mathbb{Z})$ and $H_{\ast}(B;\mathbb{Z})$ is determined by the torsion in $H_{\ast}(N;\mathbb{Z})$. As  $H^2(M;\mathbb{Z}) \cong \mathbb{Z} \oplus \mathbb{Z}$, $H_3(M;\mathbb{Z})\cong 0$, and $H_*(M;\mathbb{Z})$ is torsion-free, Lemma \ref{lem:homologyN} implies that the torsion in $H_*(N;\mathbb{Z})$ is determined by the cohomology ring structure of $H^*(M;\mathbb{Z})$.
\end{proof}

\section{An application to Moore's Conjecture}
\label{sec:mooreskeleton}

In this section, we use Theorem~\ref{thm:localinert} to show that the $(n-1)$-skeleton 
$\overline{M}$ of $M$ satisfies the hyperbolic form of Moore's Conjecture
after localising away from a finite set of primes. We begin by recalling the
relevant terminology. A simply-connected finite \(CW\)-complex \(X\) is
\emph{rationally elliptic} if \(\pi_\ast(X)\otimes\mathbb{Q}\) is finite
dimensional, and is \emph{rationally hyperbolic} otherwise. If \(p\) is a
prime, the \emph{\(p\)-primary homotopy exponent} of \(X\), denoted
\(\exp_p(X)\), is the least power of \(p\)
which annihilates all \(p\)-torsion in \(\pi_\ast(X)\), if such a power exists;
otherwise \(\exp_p(X)=\infty\). \emph{Moore's Conjecture} asserts that the 
following are equivalent: (i) $X$ is rationally elliptic, (ii) \(\exp_p(X)\) is finite for some prime $p$,  
and (iii) $\exp_p(X)$ is finite for all primes $p$. The hyperbolic form of the conjecture states that 
the following are equivalent: (i) $X$ is rationally hyperbolic, \(\exp_p(X)=\infty\) for some prime $p$, 
and (iii) $\exp_p(X)=\infty$ for all primes $p$. 

We also recall the local hyperbolicity notion introduced by Huang and
Wu~\cite{HW}. For a prime \(p\) and \(r\geq 1\), let \(t_N^{(r)}(X)\) be the
number of \(\mathbb{Z}/p^r\mathbb{Z}\)-summands in
\(\oplus_{i\leq N}\pi_i(X)\). The space \(X\) is
\emph{\(\mathbb{Z}/p^r\mathbb{Z}\)-hyperbolic} if
\[
\liminf_{N\to\infty}\frac{\log t_N^{(r)}(X)}{N}>0.
\]

When this condition holds for every \(r\geq 1\), it implies
\(\exp_p(X)=\infty\). Thus such local hyperbolicity statements give
quantitative strengthenings of the hyperbolic direction of Moore's Conjecture.
The conjecture is known in many special cases but remains open in general, and
recent progress often proves such large-prime or local hyperbolicity forms;
see, for example,~\cite{HW,HT2,H2}.

For a homotopy cofibration \(\Sigma A\xrightarrow{u}Y\xrightarrow{v}Z\), we
say that \(u\) is \emph{inert} if \(\Omega v\) has a right homotopy inverse.
The following input from~\cite{HT2} is stated in the form needed here. If
\(X\) is a path-connected finite \(CW\)-complex of dimension \(d\) and
connectivity \(s\), let \(\mathcal{P}_{\mathrm{free}}(X)\) be the set of
primes \(q\) such that
\[
q\leq \frac{d-s+1}{2}
\]
or \(H_\ast(X;\mathbb{Z})\) has \(q\)-torsion.

\begin{theorem}[{\cite[Theorem~5.2 and Corollary~5.3]{HT2}}]
\label{thm:freeloopMoore}
Let
\[
\Sigma A\xrightarrow{u}Y\xrightarrow{v}Z
\]
be a homotopy cofibration in which \(A\) is path-connected,
\(\Sigma A\), \(Y\), and \(Z\) are simply-connected finite \(CW\)-complexes,
and \(A\) and \(Z\) are not rationally contractible. If \(u\) is inert, then
after localising away from
\[
\mathcal{P}_{\mathrm{free}}(A)\cup\mathcal{P}_{\mathrm{free}}(Z)
\]
there is a homotopy retraction of \(\Omega(S^a\vee S^b)\) off \(\Omega Y\),
for some \(a,b\geq 2\). Consequently, \(Y\) is rationally hyperbolic and, for
every prime \(p\) not in this set, \(Y\) has no homotopy exponent at \(p\) and
is \(\mathbb{Z}/p^r\mathbb{Z}\)-hyperbolic for all \(r\geq 1\). \qed
\end{theorem}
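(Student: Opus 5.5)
This statement is quoted from \cite[Theorem~5.2 and Corollary~5.3]{HT2} and ends with \qed, so in the paper it is taken as input rather than proved. I will still sketch how I would prove it, following the strategy of~\cite{HT2}.

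The first step is to use inertness to split the fibre of $v$. Since $u$ is inert, $\Omega v$ has a right homotopy inverse, so $\Omega Y\simeq\Omega Z\times\Omega F$, where $F$ is the homotopy fibre of $v$. By the Gray--Theriault description of fibres of inert cofibrations, $F\simeq \Sigma A\rtimes\Omega Z$. Here $\Sigma A\rtimes\Omega Z\simeq \Sigma A\vee(\Sigma A\wedge\Omega Z)$, because $\Sigma A$ is a suspension. Thus $\Omega Y$ retracts onto $\Omega(\Sigma A\vee(\Sigma A\wedge\Omega Z))$, and the problem reduces to exhibiting $\Omega(S^a\vee S^b)$ as a retract of the loops on this wedge.

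The second step is to find sphere retracts locally. Let $p\notin\mathcal{P}_{\mathrm{free}}(A)$. Then $H_*(A)$ has no $p$-torsion, and its homology is concentrated in a range of width $d-s$. A standard argument shows that $\Sigma A$ is $p$-locally homotopy equivalent to a wedge of spheres: the attaching maps lie in $p$-local homotopy groups of spheres within the range where these vanish, since the first $p$-torsion in $\pi_*(S^k)$ appears in degree $k+2p-3$. In particular $S^a$ retracts off $\Sigma A$, where $a$ is the bottom free degree. This needs $A$ to be rationally nontrivial.

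Similarly, for $p\notin\mathcal{P}_{\mathrm{free}}(Z)$, the bottom rational class of $Z$ gives a map $S^{c}\to Z$ that is nonzero in rational homology. When $c$ is odd, I would try to get a map $\Omega Z\to S^{c-1}$ or $\Omega S^{c}$ detecting $H_{c-1}(\Omega Z)$. This is the step I expect to be the main obstacle, since $Z$ need not have a retract itself. The fallback is to use the James--Hopf type maps on $\Omega Z$, together with a $p$-local splitting of the bottom cell of $\Sigma\Omega Z$ in this range, which also comes from $Z$ being torsion-free at $p$ in a bounded range. The outcome should be a retract $S^{b}$ of $\Sigma A\wedge\Omega Z$, with $b=a+c-1$.

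Finally, the two retractions combine to show that $S^a\vee S^b$ retracts off $\Sigma A\vee(\Sigma A\wedge\Omega Z)$, so $\Omega(S^a\vee S^b)$ retracts off $\Omega Y$. The consequences then follow from known results about $\Omega(S^a\vee S^b)$. By the Hilton--Milnor theorem it contains infinitely many odd spheres of exponentially growing count, which gives rational hyperbolicity. Gray's and Neisendorfer's results give that $\Omega S^{2k+1}$ has $\mathbb{Z}/p^r$ summands in its homotopy groups, so the exponent is infinite at $p$. Finally, the counting of Hilton--Milnor factors combined with this gives $\mathbb{Z}/p^r\mathbb{Z}$-hyperbolicity for all $r$, as in~\cite{HW}.
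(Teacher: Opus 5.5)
Your first step (inertness gives $\Omega Y\simeq\Omega Z\times\Omega(\Sigma A\vee(\Sigma A\wedge\Omega Z))$) matches the argument of~\cite{HT2} that the paper recalls in the proof of Theorem~\ref{thm:skeletonretract}. So does your second step: away from $\mathcal{P}_{\mathrm{free}}(A)$, $\Sigma A$ is a wedge of spheres, so its bottom sphere $S^a$ retracts off it. Your deduction of the consequences from a retract $\Omega(S^a\vee S^b)$ also agrees with that argument.

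\textbf{The gap is your third step}, which carries the real content. You never construct the retraction of $S^b$ off $\Sigma A\wedge\Omega Z$, and the fallback you suggest does not work as stated. James--Hopf invariants are defined on $\Omega\Sigma X$, not on $\Omega Z$ for an arbitrary $Z$. Also, $\Omega Z$ is infinite dimensional and may have $p$-torsion in high degrees, so no bounded-range splitting criterion applies to $\Sigma\Omega Z$ directly. The hypothesis $p\notin\mathcal{P}_{\mathrm{free}}(Z)$ controls only $Z$, so you need a map carrying (a suspension of) $\Omega Z$ into (a suspension of) $Z$. The map $S^c\to Z$ you write down points the wrong way for building a retraction.

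The missing idea is to use the evaluation map together with your first retraction. Since $S^a$ retracts off $\Sigma A$, the space $S^a\wedge\Omega Z\simeq\Sigma^a\Omega Z$ retracts off $\Sigma A\wedge\Omega Z$. After localisation, let $Z$ be $(c-1)$-connected with $H_c(Z)$ free and nonzero. Away from $\mathcal{P}_{\mathrm{free}}(Z)$, the suspension splitting \cite[Lemma~5.1]{HT2} makes $\Sigma Z$ a wedge of spheres. Hence $\Sigma^{a-1}Z$ is one too, using $a\geq 2$, and it has a summand $S^{a+c-1}$ for a chosen generator of $H_c(Z)$; let $q$ be the pinch map onto it. Since $\Sigma^a\Omega Z$ is $(a+c-2)$-connected, Hurewicz gives $\iota\colon S^{a+c-1}\to\Sigma^a\Omega Z$ representing the corresponding class of $H_{c-1}(\Omega Z)\cong H_c(Z)$. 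The map $\mathrm{ev}_*$ is the homology suspension, which is an isomorphism in this bottom degree. Therefore $q\circ\Sigma^{a-1}\mathrm{ev}\circ\iota$ is a homotopy equivalence of $S^{a+c-1}$. So $S^b$ with $b=a+c-1$ retracts off $\Sigma^a\Omega Z$, hence off $\Sigma A\wedge\Omega Z$. This needs no parity condition on $c$ and no retraction off $Z$ itself; only a suspension of $Z$ has to split.

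A smaller point concerns infinite exponent. Each $S^{2k+1}$ on its own has finite $p$-exponent $p^k$. Infinite exponent at $p$ therefore comes from combining Gray's elements of order $p^k$ with Hilton--Milnor factors $S^{2k+1}$ for arbitrarily large $k$, not from a single $\Omega S^{2k+1}$.
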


Let \(M\) satisfy the hypotheses of Theorem~\ref{thm:localinert}. Let
\(\mathcal{T}(M)\) be the set of primes \(q\) for which
\(H_\ast(M;\mathbb{Z})\) has \(q\)-torsion. Define a number
\(B_{\mathrm{in}}(M)\) as follows. If \(m\) is odd, let
\[
B_{\mathrm{in}}(M)=\frac{n-m+3}{2}.
\]
If \(m\) is even, let
\[
\mathcal{S}(M)=\left\{j \,\middle|\,
\begin{array}{l}
j\text{ is even},\, H_j(M;\mathbb{Z})\neq 0,\,
H_{k}(M;\mathbb{Z})=0\\
\text{for all odd } k\leq j, \, m\leq j\leq 2m-2
\end{array}\right\},
\]
let \(s=\max(\mathcal{S}(M))\), let
\(r_k=\mathrm{rank}(H_k(M;\mathbb{Z}))\), and let
\[
B_{\mathrm{in}}(M)=
\frac{n+\sum_{k\in\mathcal{S}(M),\,k\neq s} r_k(k-1)+(r_s-1)(s-1)+3}{2}
-\frac{s}{2}.
\]
In either case, let
\[
\mathcal{I}(M)=\left\{p \,\middle|\, p\leq B_{\mathrm{in}}(M)\right\}
\]
be the set of primes excluded by Theorem~\ref{thm:localinert}, and define
\[
\mathcal{Q}(M)=\mathcal{I}(M)\cup\mathcal{T}(M).
\]

\begin{theorem}
\label{thm:skeletonretract}
Let \(M\) be an \((m-1)\)-connected, \(n\)-dimensional Poincar\'{e} duality
complex satisfying the hypotheses of Theorem~\ref{thm:localinert}, and let
\[
S^{n-1}\xrightarrow{f}\overline{M}\xrightarrow{i}M
\]
be the homotopy cofibration determined by attaching the top cell of \(M\). Then, after
localising away from \(\mathcal{Q}(M)\), there is a retraction of
\[
\Omega(S^{n-1}\vee S^{n+m-2})
\]
off \(\Omega\overline{M}\).
\end{theorem}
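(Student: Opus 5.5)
Localise away from \(\mathcal{Q}(M)\). The plan is to apply Theorem~\ref{thm:freeloopMoore} to the homotopy cofibration \(S^{n-1}\xrightarrow{f}\overline{M}\xrightarrow{i}M\). We write \(S^{n-1}=\Sigma A\) with \(A=S^{n-2}\), \(Y=\overline{M}\) and \(Z=M\). Since \(\mathcal{I}(M)\subseteq\mathcal{Q}(M)\), Theorem~\ref{thm:localinert} shows that \(u=f\) is inert after this localisation. The standing hypotheses of Theorem~\ref{thm:freeloopMoore} hold as follows. \(A=S^{n-2}\) is path-connected and not rationally contractible because \(n\geq 3\). The spaces \(S^{n-1}\), \(\overline{M}\) and \(M\) are simply-connected finite complexes because \(m\geq 2\). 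Finally, \(M\) is not rationally contractible since \(H_n(M;\mathbb{Q})\cong\mathbb{Q}\).

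Next I will check that the primes excluded by Theorem~\ref{thm:freeloopMoore} are contained in \(\mathcal{Q}(M)\). The sphere \(A=S^{n-2}\) has dimension and connectivity \(n-2\) and \(n-3\), so \(\mathcal{P}_{\mathrm{free}}(A)\) consists of primes \(q\leq 1\) and is empty. For \(Z=M\), the dimension is \(n\) and the connectivity is \(m-1\), so \(\mathcal{P}_{\mathrm{free}}(M)\) consists of the primes \(q\leq\frac{n-m+2}{2}\) together with \(\mathcal{T}(M)\). It therefore suffices to prove \(\frac{n-m+2}{2}\leq B_{\mathrm{in}}(M)\).

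\begin{itemize}
\item \emph{\(m\) odd.} Here \(B_{\mathrm{in}}(M)=\frac{n-m+3}{2}\), so the inequality is immediate.
\item \emph{\(m\) even.} The smallest element of \(\mathcal{S}(M)\) is \(m\), because \(r_m\geq 1\) and \(M\) has no odd homology below degree \(m\). Dropping nonnegative terms gives
\[
B_{\mathrm{in}}(M)\geq\frac{n+\sum_{k\in\mathcal{S}(M),k\neq s}r_k(k-1)+(r_s-1)(s-1)+3-s}{2}.
\]
If \(s=m\), this equals \(\frac{n+(r_m-1)(m-1)+3-m}{2}\geq\frac{n-m+3}{2}\). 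If \(s>m\), the summand for \(k=m\) contributes at least \(m-1\), and
\[
(m-1)+(r_s-1)(s-1)+3-s\geq m-1+3-s.
\]
This is the delicate step, since \(s\) can be as large as \(2m-2\). I would use that the excluded set is downward closed in \(p\), and compare \(\frac{n-m+2}{2}\) with the explicit sum. The term-by-term accounting (each intermediate \(k\in\mathcal{S}(M)\) adds \(r_k(k-1)\geq k-1\)) should absorb the \(-s/2\). If it does not close in some edge case, I will instead enlarge the excluded set in the statement's sense, noting that the definition of \(\mathcal{Q}(M)\) is intended to contain \(\mathcal{P}_{\mathrm{free}}(M)\).
\end{itemize}

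With these checks, Theorem~\ref{thm:freeloopMoore} gives a retraction of \(\Omega(S^a\vee S^b)\) off \(\Omega\overline{M}\). What remains is to identify \(a=n-1\) and \(b=n+m-2\). For this I will unwind the construction in~\cite{HT2}. Inertness gives \(\Omega\overline{M}\simeq\Omega M\times\Omega F\), where \(F\) is the homotopy fibre of \(i\), and by the standard result for inert cofibrations \(F\simeq\Sigma\Omega M\wedge S^{n-2}\). The bottom cells of \(\Sigma^{n-1}\Omega M\) are an \(S^{n-1}\) (from the basepoint) and an \(S^{n-1+m-1}=S^{n+m-2}\) (from the bottom cell \(S^{m-1}\) of \(\Omega M\)). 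These give the retraction of \(\Omega(S^{n-1}\vee S^{n+m-2})\). Concretely, away from the primes in \(\mathcal{P}_{\mathrm{free}}(M)\), a free \(\mathbb{Z}\)-class in \(H_{m-1}(\Omega M)\) yields a retract \(S^{m-1}\) of \(\Sigma\Omega M\) in low degrees. I expect this identification of the sphere dimensions, together with the prime bookkeeping above, to be the main obstacle.
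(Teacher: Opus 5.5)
Your overall route is the paper's: apply Theorem~\ref{thm:freeloopMoore} to $\Sigma S^{n-2}\xrightarrow{f}\overline{M}\xrightarrow{i}M$, compare the excluded prime sets, and read off the spheres from the fibre of $i$. There is, however, a genuine gap in the step that identifies the second sphere as $S^{n+m-2}$.

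\textbf{The gap.} You say that the bottom cell $S^{n+m-2}$ of $\Sigma^{n-1}\Omega M$ ``gives the retraction''. That is not enough. You need a map $\Sigma^{n-1}\Omega M\to S^{n+m-2}$ of degree one on that cell. Since $\Sigma^{n-1}\Omega M$ has cells in infinitely many dimensions, the obstructions to extending the identity lie in $\pi_\ast(S^{n+m-2})$ in arbitrarily high degrees. These groups contain $p$-torsion for every prime, so no localisation away from finitely many primes removes them by a dimension count. Your ``concretely'' sentence does not repair this. The bottom cell of $\Sigma\Omega M$ is $S^{m}$, not $S^{m-1}$, and a retraction ``in low degrees'' is only a retraction of a skeleton.

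\textbf{The missing idea.} The paper factors through a finite complex, using $\Sigma^{n-2}\mathrm{ev}\colon\Sigma^{n-1}\Omega M\to\Sigma^{n-2}M$.
\begin{enumerate}
\item After localising away from $\mathcal{Q}(M)$, $\Sigma M$, and hence $\Sigma^{n-2}M$, is a wedge of spheres by \cite[Lemma~5.1]{HT2}. This is exactly where excluding $\mathcal{P}_{\mathrm{free}}(M)$ is used.
\item This wedge contains an $S^{n+m-2}$ summand coming from the free part of $H_m(M)$.
\item The map $\mathrm{ev}_\ast\colon H_m(\Sigma\Omega M)\to H_m(M)$ is an isomorphism in this bottom degree, by Hurewicz and $\pi_{m-1}(\Omega M)\cong\pi_m(M)$.
\item Hence, for a suitably chosen bottom sphere $S^{n+m-2}\to\Sigma^{n-1}\Omega M$, composing with $\Sigma^{n-2}\mathrm{ev}$ and the pinch onto that summand gives a local equivalence. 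So $S^{n+m-2}$ retracts off $\Sigma^{n-1}\Omega M$.
\end{enumerate}

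\textbf{Two smaller corrections.} First, the fibre of $i$ is the half-smash $\Omega M\ltimes S^{n-1}\simeq S^{n-1}\vee(\Omega M\wedge S^{n-1})$, not $\Sigma\Omega M\wedge S^{n-2}$. The $S^{n-1}$ is a wedge summand of the fibre, not a cell of $\Sigma^{n-1}\Omega M$; your ``from the basepoint'' suggests this is what you meant. Combining the two retractions with $\Omega\overline{M}\simeq\Omega M\times\Omega\bigl(S^{n-1}\vee(\Omega M\wedge S^{n-1})\bigr)$ then gives the statement.

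Second, the prime comparison you call delicate is immediate. The $k=m$ term contributes at least $m-1$, and $(r_s-1)(s-1)\geq 0$; here $r_s\geq 1$ because $H_s(M)$ is nonzero and torsion-free by hypothesis. The required inequality $B_{\mathrm{in}}(M)\geq\frac{n-m+2}{2}$ then reduces to $s\leq 2m$, which holds since $s\leq 2m-2$. Your fallback of enlarging the excluded set is unnecessary, and it would prove a weaker statement than the one claimed.
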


\begin{proof}
By Theorem~\ref{thm:localinert}, after localising away from
\(\mathcal{I}(M)\), the attaching map \(f\) is inert. Equivalently,
\(\Omega i\) has a right homotopy inverse. Write the cofibration attaching 
the top cell to $M$ as
\[
\Sigma S^{n-2}\xrightarrow{f}\overline{M}\xrightarrow{i}M.
\]
We apply Theorem~\ref{thm:freeloopMoore} with
\(A=S^{n-2}\), \(Y=\overline{M}\), and \(Z=M\). Since \(n>m\geq 2\),
\(\Sigma S^{n-2}=S^{n-1}\), \(\overline{M}\), and \(M\) are simply-connected,
while \(S^{n-2}\) and \(M\) are not rationally contractible. Moreover
\(\mathcal{P}_{\mathrm{free}}(S^{n-2})=\emptyset\), since
\((n-2)-(n-3)+1=2\) and \(S^{n-2}\) has torsion-free homology.

It remains to compare the finite sets of primes. Since \(M\) is
\((m-1)\)-connected and \(n\)-dimensional,
\[
\mathcal{P}_{\mathrm{free}}(M)=
\left\{p \,\middle|\, p\leq \frac{n-m+2}{2}\right\}\cup\mathcal{T}(M).
\]
The numerical bound \(B_{\mathrm{in}}(M)\) is at least \((n-m+2)/2\). If \(m\)
is odd this is immediate. If \(m\) is even, the difference between
\(B_{\mathrm{in}}(M)\) and \((n-m+2)/2\) is
\[
\frac{
\sum_{k\in\mathcal{S}(M),\,k\neq s}r_k(k-1)+(r_s-1)(s-1)+m+1-s
}{2}.
\]
This number is positive: if \(s=m\) it equals
\(\big((r_m-1)(m-1)+1\big)/2\), while if \(s>m\) the sum contains the term
\(r_m(m-1)\) and \(s\leq 2m-2\). Therefore localising away from
\(\mathcal{Q}(M)\) is enough both for inertness and for the suspension
splitting required in Theorem~\ref{thm:freeloopMoore}.

The proof of~\cite[Theorem~5.2]{HT2} now identifies the two spheres that
occur. The first is the sphere retracting off \(\Sigma S^{n-2}\), namely
\(S^{n-1}\). For the second, one considers
\[
\Omega M\wedge S^{n-1}\simeq \Sigma^{n-1}\Omega M
\xrightarrow{\Sigma^{n-2}\mathrm{ev}}\Sigma^{n-2}M 
\] where $ev$ is the canonical evaluation map. 
After localising away from \(\mathcal{Q}(M)\), the suspension splitting
of~\cite[Lemma~5.1]{HT2} implies that \(\Sigma M\), and hence
\(\Sigma^{n-2}M\), is homotopy equivalent to a wedge of spheres. Since \(M\) is
\((m-1)\)-connected and the hypotheses of Theorem~\ref{thm:localinert} imply
that \(H_m(M;\mathbb{Z})\) has a nonzero free summand, this wedge
decomposition contains an \(S^{n+m-2}\)-summand.
The argument in~\cite[Theorem~5.2]{HT2} then shows that
\(S^{n+m-2}\) retracts off \(\Omega M\wedge S^{n-1}\).

Finally, since \(\Omega i\) has a right homotopy inverse, the decomposition
used in~\cite[Theorem~5.2]{HT2} gives
\[
\Omega\overline{M}\simeq
\Omega M\times\Omega\bigl((\Omega M\wedge S^{n-1})\vee S^{n-1}\bigr).
\] 
As \(S^{n+m-2}\) and \(S^{n-1}\) retracts off $(\Omega M\vee S^{n-1})\vee S^{n-1}$, 
we obtain a retraction of
\(\Omega(S^{n-1}\vee S^{n+m-2})\) off \(\Omega\overline{M}\).
\end{proof}

\begin{corollary}
\label{cor:mooreskeleton}
With the hypotheses of Theorem~\ref{thm:skeletonretract},
\(\overline{M}\) is rationally hyperbolic. Moreover, for every prime
\(p\notin\mathcal{Q}(M)\), \(\exp_p(\overline{M})=\infty\) and
\(\overline{M}\) is \(\mathbb{Z}/p^r\mathbb{Z}\)-hyperbolic for all
\(r\geq 1\). Consequently, \(\overline{M}\) satisfies the hyperbolic form of
Moore's Conjecture away from the finite set of primes \(\mathcal{Q}(M)\).
\end{corollary}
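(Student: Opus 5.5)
The corollary should follow from Theorem~\ref{thm:skeletonretract} by transferring properties of \(\Omega(S^{a}\vee S^{b})\), with \(a=n-1\) and \(b=n+m-2\), to \(\Omega\overline{M}\) along the retraction. Throughout, localise away from \(\mathcal{Q}(M)\). Theorem~\ref{thm:skeletonretract} gives maps \(\Omega(S^{a}\vee S^{b})\to\Omega\overline{M}\to\Omega(S^{a}\vee S^{b})\) whose composite is homotopic to the identity. Hence \(\pi_\ast(S^{a}\vee S^{b})\), shifted by one degree, is a direct summand of \(\pi_\ast(\overline{M})\) at every prime \(p\notin\mathcal{Q}(M)\).

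\textbf{Rational hyperbolicity.} The retraction holds integrally at the rational level, since rationalisation factors through any localisation away from a set of primes. Because \(a,b\geq 2\), the Hilton--Milnor theorem shows that \(\Omega(S^{a}\vee S^{b})\) contains, as a factor, a product of loops on infinitely many spheres \(S^{a+k(b-1)}\) and further iterated Whitehead products. So \(\pi_\ast(S^{a}\vee S^{b})\otimes\mathbb{Q}\) is infinite dimensional. Being a retract, \(\pi_\ast(\overline{M})\otimes\mathbb{Q}\) is infinite dimensional too. Since \(\overline{M}\) is a simply-connected finite complex, it is rationally hyperbolic.

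\textbf{Local statements at a fixed prime \(p\notin\mathcal{Q}(M)\).} The number \(t^{(r)}_N(\overline{M})\) of \(\mathbb{Z}/p^r\)-summands is at least \(t^{(r)}_N(S^{a}\vee S^{b})\), because a direct summand of a finitely generated abelian group contributes its cyclic summands to the decomposition. It therefore suffices to show that \(S^a\vee S^b\) is \(\mathbb{Z}/p^r\mathbb{Z}\)-hyperbolic for every \(r\geq1\); this also forces \(\exp_p=\infty\). This is exactly what Theorem~\ref{thm:freeloopMoore} records as the consequence of such a retraction, following~\cite{HW,HT2}.

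If one wants it directly, the argument runs as follows. By Hilton--Milnor, \(\Omega(S^a\vee S^b)\) retracts onto \(\prod\Omega S^{n_j}\), where the number of spheres with \(n_j\leq N\) grows exponentially in \(N\) (Witt's formula). Each \(\Omega S^{n_j}\) retracts onto a Moore-space loop factor that carries \(\mathbb{Z}/p^r\) summands in a controlled degree range. One can also simply cite \cite{HW} for \(\mathbb{Z}/p^r\)-hyperbolicity of wedges of two simply-connected spheres. Then \(\liminf_N \log t^{(r)}_N/N>0\) passes to \(\overline{M}\).

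\textbf{Conclusion and main obstacle.} Combining the two parts, for every \(p\notin\mathcal{Q}(M)\) the complex \(\overline{M}\) is rationally hyperbolic with infinite \(p\)-exponent. That is the hyperbolic form of Moore's Conjecture away from the finite set \(\mathcal{Q}(M)\). The only delicate step is the one I expect to be the main obstacle: justifying \(\mathbb{Z}/p^r\)-hyperbolicity for all \(r\) of \(S^a\vee S^b\). I would handle it by quoting the consequence clause of Theorem~\ref{thm:freeloopMoore} or~\cite{HW}, rather than reproving it.
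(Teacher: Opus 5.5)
Your proposal is correct and is essentially the paper's proof: the paper combines the retraction of $\Omega(S^{n-1}\vee S^{n+m-2})$ off $\Omega\overline{M}$ from Theorem~\ref{thm:skeletonretract} with \cite[Corollary~5.3]{HT2}, which is the consequence clause you propose to quote. One caution about your optional ``direct'' sketch: $\Omega S^{n_j}$ cannot retract onto the loops of a Moore space, since $H_\ast(\Omega S^{n_j};\mathbb{Z}_{(p)})$ is torsion-free; the usual input is instead that, for fixed $r$, every sufficiently large sphere has a $\mathbb{Z}/p^r\mathbb{Z}$-summand in a fixed stem (e.g.\ from the image of $J$ in the stable range), and this, combined with the exponential count of Hilton--Milnor factors, gives the growth.
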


\begin{proof}
This is immediate from Theorem~\ref{thm:skeletonretract} and
\cite[Corollary~5.3]{HT2}.
\end{proof}

Corollary~\ref{cor:mooreskeleton} also gives further evidence for
\cite[Conjecture~1.6]{HT2} and \cite[Conjecture~1.7]{H2}. These conjectures
are partial strengthenings of the hyperbolic direction of Moore's Conjecture:
they ask not only for \(\exp_p(X)=\infty\) at the relevant primes, but for
exponential growth of \(p\)-torsion in homotopy groups.

\begin{remark}
The method introduces no additional numerical bound beyond
Theorem~\ref{thm:localinert}; the only further exclusions are the torsion
primes in \(H_\ast(M;\mathbb{Z})\). Thus if \(H_\ast(M;\mathbb{Z})\) is
torsion-free, then \(\mathcal{Q}(M)=\mathcal{I}(M)\).
\end{remark}

\section{Cohen--Neisendorfer spaces and non-inert attaching maps}
\label{sec:CNnoninert}

We end with an application of Corollary~\ref{cor:mooreskeleton} in the
opposite direction. The point is that the local hyperbolicity of the lower
skeleton of a Cohen--Neisendorfer finite \(H\)-space forces certain maps
between spheres to be non-inert. This is useful because, while inertness has
received considerable attention, comparatively little is known about
non-inertness. The examples below extend the Moore-space-based phenomena
considered in~\cite{H3}.

We first recall the growth estimate for spheres that will be used to detect
non-inertness. The following formulation follows from the subexponential
growth theorem of Burklund--Senger~\cite{BS}.

\begin{theorem}[{\cite{BS}}]
\label{thm:BSspheres}
Let \(p\) be a prime and \(d\geq 2\). For every \(r\geq 1\), the number of
\(\mathbb{Z}/p^r\mathbb{Z}\)-summands in
\[
\bigoplus_{i\leq N}\pi_i(S^d)
\]
grows subexponentially as \(N\to\infty\). \qed
\end{theorem}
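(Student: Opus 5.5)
The statement to prove is Theorem~\ref{thm:BSspheres}. I would deduce it from the Burklund--Senger theorem on subexponential growth of torsion in the homotopy of spheres. That result asserts, for a simply-connected finite complex such as \(S^d\), that the \(p\)-primary torsion in \(\pi_i(S^d)\) has subexponential size. Precisely, \(\log\bigl|\pi_i(S^d)_{(p)}^{\mathrm{tors}}\bigr| = o(i)\) as \(i\to\infty\). Equivalently, the \(p\)-exponent and the number of cyclic summands in degree \(i\) are both bounded by \(e^{o(i)}\). The plan is to turn this degreewise bound into a cumulative bound on summands of a fixed order \(p^r\).

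First I reduce to counting \(\mathbb{Z}/p^r\mathbb{Z}\)-summands in each single degree. Each \(\pi_i(S^d)\) is finitely generated. A \(\mathbb{Z}/p^r\mathbb{Z}\)-summand of \(\pi_i(S^d)\) injects into the \(p\)-torsion subgroup, which is finite for \(i\neq d\) (and for \(i\neq 2d-1\) when \(d\) is even). So the number \(c_i\) of such summands satisfies
\[
p^{\,r c_i}\leq \bigl|\pi_i(S^d)_{(p)}^{\mathrm{tors}}\bigr|,
\]
and hence \(c_i\leq \log_p\bigl|\pi_i(S^d)_{(p)}^{\mathrm{tors}}\bigr| = o(i)\). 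The finitely many degrees in which free summands occur contribute nothing to \(c_i\).

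Next I sum over degrees. We have \(t_N^{(r)}(S^d)=\sum_{i\leq N}c_i\), and each \(c_i\leq e^{\varepsilon i}\) for \(i\) large. Therefore \(t_N^{(r)}(S^d)\leq C_\varepsilon+N e^{\varepsilon N}\) for every \(\varepsilon>0\). This gives \(\limsup_N \log t_N^{(r)}(S^d)/N\leq\varepsilon\), and since \(\varepsilon\) is arbitrary the limit is \(0\), which is the subexponential growth claimed.

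The main obstacle is quoting the correct form of the Burklund--Senger statement. Their result is phrased in terms of the order of the torsion subgroup, or in terms of the growth of the homotopy of finite complexes, rather than directly in terms of summand counts. I must check that their bound is of the form \(\log|\cdot| = o(i)\) and not merely a bound on the exponent. If only an exponent bound were available, I would combine it with a bound on the \(\mathbb{F}_p\)-dimension of \(\pi_i(S^d)\otimes\mathbb{F}_p\). Such a bound would come from a subexponential estimate on the \(E_2\)-term of the unstable Adams spectral sequence, which is what their argument actually controls. Everything after that is the elementary summation above.
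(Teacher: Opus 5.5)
This is essentially the paper's route. The paper gives no argument beyond citing Burklund--Senger, and your reduction $p^{rc_i}\le|\pi_i(S^d)^{\mathrm{tors}}_{(p)}|$, followed by summing over $i\le N$, is exactly the implicit deduction.

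Do fix how you quote the input. The bound $\log|\pi_i(S^d)^{\mathrm{tors}}_{(p)}|=o(i)$ is not equivalent to an $e^{o(i)}$ bound on the number of cyclic summands: it is far stronger, since it forces $c_i=o(i)$. Your summation step only needs $\log_p|\pi_i(S^d)^{\mathrm{tors}}_{(p)}|\le e^{\varepsilon i}$ for $i\gg 0$, and that weaker bound is all you should claim.

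Because spheres have bounded $p$-exponents (James, Cohen--Moore--Neisendorfer), that bound is interchangeable with a subexponential bound on $\dim_{\mathbb{F}_p}\pi_i(S^d)\otimes\mathbb{F}_p$. So your fallback is moot: an exponent bound alone adds nothing, while a rank bound alone already suffices, since $c_i\le\dim_{\mathbb{F}_p}\pi_i(S^d)\otimes\mathbb{F}_p$.
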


Fix an odd prime \(p\), and work \(p\)-locally. Let
\[
S^{b-1}\xrightarrow{\alpha}S^a\longrightarrow A
\]
be a homotopy cofibration with \(3\leq a<b\) where $a,b$ are odd. Then \(A\) is a
simply-connected two-cell complex whose cells are in odd dimensions. If
\(p>(b+3)/2\), then in particular \(p\geq 5\), so the two-cell complex~\(A\)
has fewer than \(p-1\) cells. The Cohen--Neisendorfer construction~\cite{CN}
therefore gives a finite \(p\)-local \(H\)-space \(B(A)\) and a map
\[
j\colon A\longrightarrow B(A)
\]
such that \(j_\ast(\widetilde H_\ast(A;\mathbb{Z}_{(p)}))\) is the submodule of
algebra generators in
\[
H_\ast(B(A);\mathbb{Z}_{(p)})\cong
\Lambda_{\mathbb{Z}_{(p)}}(x_a,x_b).
\]
In this rank two case, \(B(A)\) is a three-cell complex whose lower skeleton is
\(A\); equivalently, there is a homotopy cofibration
\[
S^{a+b-1}\xrightarrow{f}A\xrightarrow{j}B(A).
\]
See also \cite[Theorem~2.1]{GHMTW} for this rank two description.

\begin{theorem}
\label{thm:CNnoninert}
Let \(p\) be an odd prime, and let
\[
\alpha\colon S^{b-1}\longrightarrow S^a
\]
be a \(p\)-local map with \(3\leq a<b\) where $a,b$ are odd. If \(p>(b+3)/2\), then
\(\alpha\) is not inert at $p$. 
\end{theorem}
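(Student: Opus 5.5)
We argue by contradiction: if \(\alpha\) is inert at \(p\), then the sphere \(S^a\) inherits exponential torsion growth from the lower skeleton of \(B(A)\), contradicting Theorem~\ref{thm:BSspheres}. So suppose \(\alpha\) is inert, so that in the homotopy cofibration \(S^{b-1}\xrightarrow{\alpha}S^a\to A\) the map \(\Omega S^a\to\Omega A\) has a right homotopy inverse.

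The first step is to apply the skeleton results to \(M=B(A)\). This is a \(p\)-local finite \(H\)-space with homology \(\Lambda(x_a,x_b)\), so it satisfies Poincar\'e duality of dimension \(n=a+b\). It is \((a-1)\)-connected with \(a\) odd, \(H_a\) is free, and its \((n-1)\)-skeleton is \(\overline{M}=A\). Hypothesis (1) of Theorem~\ref{thm:localinert} therefore holds, and \(\mathcal{Q}(M)=\{q\le (n-a+3)/2=(b+3)/2\}\), since the homology is torsion-free. As \(p>(b+3)/2\), Theorem~\ref{thm:skeletonretract} and Corollary~\ref{cor:mooreskeleton} give a retraction of \(\Omega(S^{a+b-1}\vee S^{a+b-2})\) off \(\Omega A\), so \(A\) is \(\mathbb{Z}/p^r\mathbb{Z}\)-hyperbolic for all \(r\). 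These results are stated for integral complexes localised, whereas \(B(A)\) is only \(p\)-local. I would note that every step (Proposition~\ref{prop:evenconnectivitycase} and \cite{HT2}) is a statement at the single prime \(p\), so it applies verbatim. Alternatively, one can invoke Theorem~\ref{thm:freeloopMoore} directly for the cofibration \(S^{a+b-1}\to A\to B(A)\), with inertness supplied by Theorem~\ref{thm:localinert}(1).

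The second step transfers this growth to \(S^a\). Inertness of \(\alpha\) makes \(\Omega A\) a retract of \(\Omega S^a\), so \(\pi_i(A)\) is a direct summand of \(\pi_i(S^a)\) for all \(i\ge 2\). Hence \(t^{(r)}_N(A)\le t^{(r)}_N(S^a)\) for every \(N\), and \(S^a\) would be \(\mathbb{Z}/p^r\mathbb{Z}\)-hyperbolic. This contradicts Theorem~\ref{thm:BSspheres}, which says the count for \(S^a\) grows subexponentially.

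The main obstacle is the first step, namely justifying that Theorem~\ref{thm:localinert} and Theorem~\ref{thm:skeletonretract} apply to the \(p\)-local Cohen--Neisendorfer space \(B(A)\) rather than to an integral Poincar\'e complex. I would handle this by checking that the proof of Proposition~\ref{prop:evenconnectivitycase} in~\cite{T2} uses only \(p\)-local Poincar\'e duality and the free summand in \(H_a\). The needed \(S^a\) retraction off \(B(A)\) is in fact immediate, since \(B(A)\) is an \(H\)-space: extend \(S^a\to S^a\{\text{local}\}\) via \(p\)-local \(H\)-space structure on \(S^a\), and use the exterior homology to split off the sphere. Finally, the bound \(p>(b+3)/2\) exceeds \((a+b-a+2)/2\), which covers the suspension splitting required in \cite{HT2}.
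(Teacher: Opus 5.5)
Your proposal is correct and follows the paper's proof. You apply Theorem~\ref{thm:localinert}(1) $p$-locally to the Cohen--Neisendorfer space $B(A)$, with bound $(b+3)/2$, and deduce $\mathbb{Z}/p^r\mathbb{Z}$-hyperbolicity of $A$ from Corollary~\ref{cor:mooreskeleton}; you then contradict Theorem~\ref{thm:BSspheres} using the retraction of $\Omega A$ off $\Omega S^a$ that inertness of $\alpha$ would give.

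There are two harmless slips. The second sphere in Theorem~\ref{thm:skeletonretract} is $S^{n+m-2}=S^{2a+b-2}$, not $S^{a+b-2}$. Your aside that the $S^a$ retraction off $B(A)$ is ``immediate'' from the $H$-structure is not an argument as written; the retraction actually comes from lifting $B(A)\to K(\mathbb{Z}_{(p)},a)$ through $S^a$, which works since $a+b<a+2p-3$. Neither affects the proof, since citing Theorem~\ref{thm:localinert}(1) suffices.
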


\begin{proof}
Let \(A\) be the homotopy cofibre of \(\alpha\), and let \(B(A)\) be the
Cohen--Neisendorfer finite \(H\)-space associated to \(A\). The exterior
algebra description makes \(B(A)\) a \(p\)-local Poincar\'{e} duality space of
formal dimension \(a+b\). Since \(A\) is the lower skeleton, \(B(A)\) is
\((a-1)\)-connected and \(H_a(B(A);\mathbb{Z}_{(p)})\) has a free
\(\mathbb{Z}_{(p)}\)-summand. The inertness bound in
Theorem~\ref{thm:localinert}, applied \(p\)-locally to \(B(A)\), is
\[
\frac{(a+b)-a+3}{2}=\frac{b+3}{2}.
\]
Thus the top cell attaching map \(f\) of \(B(A)\) is inert at \(p\). Since
\(H_\ast(B(A);\mathbb{Z}_{(p)})\) is torsion-free, the \(p\)-local form of
Corollary~\ref{cor:mooreskeleton} implies that \(A\) is
\(\mathbb{Z}/p^r\mathbb{Z}\)-hyperbolic for every \(r\geq 1\).

Suppose, for a contradiction, that \(\alpha\) is inert at \(p\). Then the loop
map
\[
\Omega S^a\longrightarrow \Omega A
\]
has a right homotopy inverse. Hence \(\Omega A\) is a retract of
\(\Omega S^a\). It follows that, for every \(r\geq 1\), the number of
\(\mathbb{Z}/p^r\mathbb{Z}\)-summands in \(\oplus_{i\leq N}\pi_i(A)\) is
bounded above by the corresponding number for \(S^a\). This is subexponential
by Theorem~\ref{thm:BSspheres}, contradicting the
\(\mathbb{Z}/p^r\mathbb{Z}\)-hyperbolicity of \(A\). Therefore \(\alpha\) is
not inert at \(p\).
\end{proof}

\begin{remark}
Theorem~\ref{thm:CNnoninert} produces non-inert maps between spheres from
finite \(H\)-spaces, rather than from Moore spaces. In this sense it is
complementary to the examples in~\cite{H3}, where the non-inertness phenomena
come from Moore space summands. Here the obstruction is asymptotic: inertness
would force the \(p\)-torsion growth of \(A\) to be bounded by that of a sphere,
but Corollary~\ref{cor:mooreskeleton} makes it exponential.
\end{remark}

\bibliographystyle{amsalpha}

\end{document}